\documentclass[12pt,twoside,english,reqno,a4paper]{amsart}

\usepackage{fullpage}

\usepackage{natbib}
\newtheorem{lemma}{Lemma}

\newtheorem{rem}{Remark}[section]
\newcommand{\RomanNumeralCaps}[1]
\linenumbers

\numberwithin{equation}{section}
\usepackage{boondox-cal}

\usepackage[utf8]{inputenc}
\usepackage[english]{babel}
\usepackage[lined,boxed]{algorithm2e}
\usepackage{float} 
\usepackage{hyperref}
\usepackage{listings,graphicx,caption,mwe,amsmath,varioref,amscd,amssymb,color,xcolor,bm,amsthm,amsfonts,graphics,lineno,float,comment}
\usepackage{mathrsfs} 
\usepackage[foot]{amsaddr}


\newcommand{\eps}{\varepsilon}

\newcommand{\R}{{\mathbb{R}}}

\DeclareMathOperator*{\supp}{supp}

\renewcommand{\d}{{\mathrm d}}

\newcommand{\m}{\mathcal m}
\newcommand{\mmu}{\mu_{\textsc{cl}}}

\newcommand{\ignore}[1]{}

\usepackage{color,xcolor}


\newcounter{boxlblcounter}

\newenvironment{boxlabel}
  {\begin{list}
    {\arabic{boxlblcounter}}
    {\usecounter{boxlblcounter}
     \setlength{\labelwidth}{20pt}
     \setlength{\labelsep}{5pt}
     \setlength{\itemsep}{2pt}
     \setlength{\leftmargin}{25pt}
     \setlength{\rightmargin}{0cm}
     \setlength{\itemindent}{0em}
    }
  }
{\end{list}}

\newcounter{boxlblcountertwo}
\newenvironment{boxlabeltwo}
  {\begin{list}
    {\arabic{boxlblcountertwo}}
    {\usecounter{boxlblcountertwo}
     \setlength{\labelwidth}{0pt}
     \setlength{\labelsep}{5pt}
     \setlength{\itemsep}{2pt}
     \setlength{\leftmargin}{5pt}
     \setlength{\rightmargin}{0cm}
     \setlength{\itemindent}{0em}
    }
  }
{\end{list}}

\newcommand{\nocontentsline}[3]{}
\newcommand{\tocless}[2]{\bgroup\let\addcontentsline=\nocontentsline#1{#2}\egroup}



\title{Thin-film equations with singular potentials: an alternative solution to the contact-line paradox}

\author[R. Durastanti]{Riccardo Durastanti$^1$}

\author[L. Giacomelli]{Lorenzo Giacomelli$^{2,*}$}

\address{$^1$
Department of Mathematics and Applications ``Renato Caccioppoli'', University of Naples ``Federico II'', Via Cintia, Monte S. Angelo, 80126 Napoli, Italy
\\ riccardo.durastanti@unina.it}

\address{$^2$ SBAI Department, Sapienza University of Rome, Via Antonio Scarpa 16, 00161 Roma, Italy
\\ lorenzo.giacomelli@uniroma1.it}

\address{$^*$ Corresponding author}


\begin{document}

\begin{abstract}
In the regime of lubrication approximation, we look at spreading phenomena under the action of singular potentials of the form $P(h)\approx h^{1-m}$ as $h\to 0^+$ with $m>1$, modeling repulsion between the liquid-gas interface and the substrate. We assume zero slippage at the contact line. Based on formal analysis arguments, we report that for any $m>1$ and any value of the speed (both positive and negative) there exists a three-parameter, hence generic, family of fronts (i.e., traveling-wave solutions with a contact line). A two-parameter family of advancing ``linear-log'' fronts also exists, having a logarithmically corrected linear behaviour in the liquid bulk. All these fronts have finite rate of dissipation, indicating that singular potentials stand as an alternative solution to the contact-line paradox. In agreement with steady states, fronts have microscopic contact angle equal to $\pi/2$ for all $m>1$ and finite energy for all $m<3$. We also propose a selection criterion for the fronts, based on thermodynamically consistent contact-line conditions modeling friction {\em at} the contact line. So as contact-angle conditions do in the case of slippage models, this criterion selects a {\em unique} (up to translation) linear-log front for each positive speed. Numerical evidence suggests that, fixed the speed and the frictional coefficient, its shape depends on the spreading coefficient, with steeper fronts in partial wetting and a more prominent precursor region in dry complete wetting.
\end{abstract}

\keywords{Asymptotic expansions, travelling wave solutions, thin-film equations, drops, contact lines, thin liquid films, wetting, lubrication theory, precursor, inter-molecular potential}

\subjclass[2020]{
35C07, 
34C60, 
34E05, 
35G20,  
35K65, 
35Q35, 
76A20, 
76D08 
}

\maketitle

\tableofcontents

\renewcommand{\thefootnote}{\arabic{footnote}}

\section{Introduction}

It is about half a century since the no-slip paradox was discovered by \citet{HS} and \citet{DD}. The paradox may be summarized as follows: take a liquid droplet which is sliding over a solid dry substrate, as modeled by Stokes equations;  if the no-slip condition were adopted, that is, if a null liquid's horizontal velocity were prescribed at the liquid-solid interface, then an infinite force would be required to move the {\it contact line}, i.e. the triple junction where solid, liquid and gas meet. In the words of \citet{HS}, ``not even Herakles could sink a solid''.
The paradox may be discussed already in the framework of lubrication approximation: this is an asymptotic limit of the full Navier-Stokes system under a suitable scaling, which in words requires a small ratio of vertical vs horizontal length-scale, a relatively small velocity, and a relatively large surface tension. It is a simplified model which however retains the essential physics at such scales: a dissipative evolution driven by surface tension and limited by both viscous and interfacial friction (see for example \eqref{fgf}, \eqref{GF-stan}, and \eqref{GF-new} below). In fact, generic contact lines, at leading order around one of their points, are locally straight: therefore the essential features of the contact-line paradox are already captured by a one-dimensional setting, which we therefore adopt in the sequel.

\subsection{The model}

In lubrication theory, the evolution of a thin liquid film, or a droplet, of viscous incompressible liquid over a horizontal solid substrate is described by the thin-film equation
\citep{Greenspan,Hoc1,ODB,GO2,KM2}, which in its basic one-dimensional form reads as
\begin{equation}\label{TFE}
h_{t} +(hV)_x=0, \quad V= \frac{\gamma}{\mu}\frac{\m(h)}{h} (h_{xx}-Q'(h))_x \quad\mbox{on $ \{h>0\}$}.
\end{equation}
Here the solid substrate corresponds to the $x$-axis, $t$ is time, $h(t,x)$ is the liquid's height over the solid, $\mu$ is the liquid's viscosity, and $\gamma$ is the liquid-gas surface tension. The mobility function $\m$ depends on the condition at the liquid-solid interface: when the no-slip condition is assumed, then
\begin{equation}\label{h:m}
\m(h)=\tfrac13 h^3.
\end{equation}
The potential $Q$ usually combines the effects of intermolecular, surface, and gravitational forces \citep{DG}; here we shall ignore the latter ones for simplicity:
\begin{equation}\label{def-Q}
Q(h)=(P(h)-S+G(h))\chi_{\{h>0\}}, \qquad \mbox{with $G\equiv 0$ and $S\in \R$ in this manuscript.}
\end{equation}

The constant $S$ (assumed to be relatively small in lubrication theory, cf. Remark \ref{rem:pi}) is the non-dimensional {\em spreading coefficient}:
\begin{equation*}
S=\frac{\mbox{spreading coefficient}}{\gamma}= \frac{\gamma_{SG}-\gamma_{SL}-\gamma}{\gamma} = \frac{\gamma_{SG}-\gamma_{SL}}{\gamma}-1,
\end{equation*}
where $\gamma_{SL}$, and $\gamma_{SG}$ are the solid-liquid and solid-gas tensions, respectively. There is, however, a caveat to be made at this point. In thermodynamic equilibrium of the solid with the surrounding vapor phase (the so-called ``moist'' case, which concerns for instance a surface which has been pre-exposed to vapor), $\gamma_{SG}$ is usually denoted by $\gamma_{SV}$, and its value can never exceed $\gamma_{SL}+\gamma$. Indeed, otherwise the free energy of a solid/vapor interface could be lowered by inserting a liquid film in between: the equilibrium solid/vapor interface would then comprise such film, leading to $\gamma_{SV}=\gamma_{SL}+\gamma$. Therefore, $S\le 0$ in the ``moist'' case. On the other hand, when the solid and the gaseous phases are not in thermodynamical equilibrium (the so-called ``dry'' case), there is no constraint on the sign of $S$.

\smallskip

The function $P$ is an intermolecular potential. Generally speaking, $P$ is singular as $h\to 0^+$ and decays to zero as $h\to +\infty$, with $\Pi=-P'$ usually referred to as the {\it disjoining pressure}. We consider the case in which $P$ is {\it short-range repulsive}, in the sense that it penalizes short distances between the liquid-gas interface and the solid:$^1$\footnotetext[1]{As $h\to h_0$, we write: $f(h)\sim g(h)$ when $f(h)/g(h)\to 1$; $f(h)\approx g(h)$ when $C>0$ exists such that $f(h)/g(h)\to C$; $f(h)=O(g(h))$ when $f(h)/g(h)$ remains bounded; $f(h)=o(g(h))$ when $f(h)/g(h)\to 0$. Also, we write $a\ll b$ if a universal constant $C\ge 1$ exists such that $a\le Cb$.\hfill $\ $}
\begin{equation}\label{hp-p0-}
P(h) \sim  \tfrac{A}{m-1} h^{1-m} \ \mbox{ as $h\to 0^+$}, \quad A>0,\quad m>1, \quad P(0)=P(+\infty)=0.
\end{equation}
The standard choice for $P$ yields $m=3$:
\begin{equation}\label{vdw}
P_0(h) = A_0 h^{-2}, \ \mbox{ $A_0=\frac{A'}{12\pi\gamma}$, \ where $A'>0$ is the Hamaker constant},
\end{equation}
which corresponds to an integration of Lifshitz–van der Waals interactions between molecules \citep{I2011,Craster2009} and which we shall hereafter refer to as van der Waals potentials. There are, however, reasons to examine different values of $m$. The first one is that the form of the disjoining pressure is highly dependent on the nature of the dominant intermolecular force (molecular, electrostatic, structural) and on the scales under consideration: for instance, the electrostatic and structural contributions for water on glass or silica surfaces yield $m=1$ or $m=2$, depending on thickness \citep{Pashley1980,TDS1988}; we refer to the lucid discussion in \citet{DF2018}. A second reason will be introduced in \S\ref{ss:statics}.

\subsection{The contact-line paradox}

In order to introduce the contact-line paradox, it is convenient to describe the basic energetic structure of \eqref{TFE}. The free energy of the system is given by
\begin{equation}\label{def-E}
E[h]= \gamma \int_{\{h>0\}} \left(\tfrac12 h_x^2 +Q(h)\right)\d x = \gamma\int_{\{h>0\}} \left(1+\tfrac12 h_x^2 +(Q(h)-1)\right)\d x.
\end{equation}
In lubrication theory, the term $\gamma(1+\tfrac12 h_x^2)$ is the leading-order approximation of the liquid-gas surface energy density $\gamma\sqrt{1+h_x^2}$. The summand $1$ is incorporated in the potential $Q$. Smooth, positive and, say, periodic solutions to \eqref{TFE} (e.g. modelling a liquid film) satisfy the energy balance
\begin{equation}\label{fgf}
\frac{\d}{\d t} E[h]
= -\underbrace{\mu\int_{\{h>0\}} \frac{h^2}{\m(h)} V^2\d x}_{\textrm{rate of bulk dissipation}}.
\end{equation}
When non-negative solutions are considered (e.g. modelling a droplet), then contributions at the contact line appear (cf. e.g. \eqref{gfpax} below), but the rate of bulk dissipation remains the same: it encodes both viscous friction within the liquid and, through $\m$, interfacial friction at the liquid-solid interface.

\smallskip

In the framework of \eqref{TFE}, the contact-line paradox manifests itself as follows. Assume the no-slip condition, i.e. \eqref{h:m}, and consider a  travelling wave solutions to \eqref{TFE}:
\begin{equation}\label{TW-intro}
h(t,x)=H(y), \quad y=x+Vt, \quad \mbox{with $H>0$ in $(0,+\infty)$ and $H(0)=0$,}
\end{equation}
where $V\ne 0$ is a constant velocity (here we have assumed w.l.o.g. that the contact line is initially located at $x=0$). If $P\equiv 0$, then advancing ($V>0$) travelling wave solutions to \eqref{TFE} of the form \eqref{TW-intro} do not exist, whereas receding ones ($V<0$) have a non-integrable rate of dissipation density (see \eqref{tw-noslip} below).
Therefore, for $\m(h)=\tfrac13 h^3$ and $P\equiv 0$, travelling waves with finite dissipation do not exist at all: this is the manifestation of the contact-line paradox in lubrication theory.

\smallskip

Since the contact-line paradox was discovered, quite a few enrichments of the basic model have being put forward to relieve it: we refer to the reviews by \citet{ODB}, \citet{DG}, \citet{Bonn}, and \citet{A}. The standard method, which was first investigated by \citet{HuhMason1977}, \citet{hocking_1976,hocking_1977}, and \citet{Greenspan}, is to allow for fluid slip over the solid (at least in a neighborhood of the contact line), which amounts to prescribing a relation between horizontal velocity and shear stress at the liquid-solid interface. In lubrication theory, these relations modify the mobility function to $\m(h)=\frac13 (h^3 + \lambda^{3-n} h^n)$ ($n=2$ for the classical \citet{navier} slip condition); $\lambda$ is a length-scale whose inverse is proportional to liquid-solid friction. A second one, introduced to our knowledge by \citet{WS}, is to assume a shear-thinning rheology, with a vanishing liquid's viscosity as the shear stress blows up: this introduces a nonlinear dependence of $V$ on $h_{xxx}$ (see e.g. \citet{FliK,King,AG,AG2}). More recently, it has been observed by \citet{red-col, red-col-2} and \citet{JAP,JanDG} that the Kelvin effect, i.e., a curvature-induced variation of saturation conditions, may also be employed to resolve contact-line paradox: in this case, \eqref{TFE} is complemented with a singular term in non-divergence form.

\smallskip

Another way to resolve the paradox was first discussed by  \citet{Starov}, \citet{DG-CR}, and \citet{HDG}: it consists in taking the effect of intermolecular potentials $P$ into account. The goal of this note is to revisit this phenomenon in a systematic way for generic potentials. To this aim, it is convenient to review the statics first.

\subsection{Statics}\label{ss:statics}

Consider absolute minimizers $h_{min}$ of $E$, as given by \eqref{def-E}, under the constraint of given mass $M$.

\smallskip

When $P\equiv 0$ and $S<0$, $h_{min}$ is an arc of parabola characterized by its mass $M$ and $|S|$; in particular, its slope at the contact line $\partial \{h>0\}$ is determined by $|S|$:
$$
h_{min}=\frac{3M}{4s^3}(s^2-x^2)_+, \quad s^2=\frac{3M}{2\tan\theta_S}, \quad \theta_S:=\arctan\sqrt{2|S|}, \quad S<0.
$$
When $P\equiv 0$ and $S\ge 0$, $h_{min}$ instead does not exist, and minimizing sequences converge to an unbounded film with zero thickness. Therefore it is common to define  the {\it static} (or {\it equilibrium}) {\it microscopic contact angle} $\theta_S$, and to name the two regimes, as follows:
\begin{equation}\label{def-thetaS}
\theta_S:= \left\{\begin{array}{lll} \arctan\sqrt{2|S|} & \mbox{ if $S<0$}  & \qquad \mbox{({\it partial wetting})}
\\[1ex] 0 & \mbox{ if $S\ge 0$} & \qquad \mbox{({\it complete wetting})}. \end{array}\right.
\end{equation}

Let us now take $P$ into account. Consider steady states $h_{\textsc{ss}}$ with connected positivity set, that is, solutions to the Euler-Lagrange equation
\begin{equation}\label{EL}
-h_{xx} + P'(h) = \Lambda,
\end{equation}
where $\Lambda\in \R$ is a Lagrange multiplier coming from the mass constraint. The properties of $h_{\textsc{ss}}$ have been formally discussed by \citet{JDG}, \citet{DG}, and \citet{LJ}.
Their macroscopic shape (here encoded by looking at the regime $M\gg 1$) may be droplet-like or pancake-like. When $R(h)=Q(h)/h$ has a unique absolute minimum point $e_*$, then $e_*$ is characterized by
\begin{equation}\label{e*-DG}
S =P(e_*)-e_* P'(e_*)
\end{equation}
and $h_{\textsc{ss}}$ is pancake-shaped:
\begin{equation}\label{as-p}
h_{\textsc{ss}}\sim e_*\chi_{\{|x|\le s\}},\ \ s \sim\frac{M}{2e_*} \quad \mbox{for $M\gg 1$ } \quad \mbox{if $e_*<+\infty$}.
\end{equation}
Since $S>0$ implies $e_*<+\infty$, this configuration is generic in dry complete wetting. Thus, it is the wetting coefficient which drives the system towards a ``pancake'' equilibrium. If on the other hand $R$ has no absolute minimum, then necessarily $S\le 0$; steady states are droplet-shaped if $S<0$,
\begin{equation}\label{as-d}
h_{\textsc{ss}}\sim\frac{3M}{4s^3}(s^2-x^2)_+, \ \ s^2 \sim \frac{3M}{2\tan\theta_{mac}}, \ \ \theta_{mac}:=\arctan\sqrt{2|S|}\quad \mbox{for $M\gg 1$ } \quad \mbox{if $S<0$},
\end{equation}
where $\theta_{mac}$ denotes the {\em macroscopic} contact angle  (see Fig. \ref{fig:droplet}). When $S=0$ and $e_*=+\infty$ the shape is still droplet-like, but constants depend on the large-$h$ behavior of $P$. Finally, a simple asymptotic expansion of \eqref{EL} using \eqref{hp-p0-} shows that
\begin{equation}\label{ss-0}
h_{\textsc{ss}}(x)\sim
\left(\tfrac{A(m+1)^2}{2(m-1)}\right)^{\frac{1}{m+1}} (s-x)^{\frac{2}{m+1}},
\quad\mbox{as $x\to s^-$,}
\end{equation}
implying that the microscopic contact angle equals $\pi/2$ for any $m>1$ (Fig. \ref{fig:droplet}).

\medskip

\noindent \begin{minipage}[t]{1\textwidth}
\captionsetup{width=1\linewidth}
\centering\raisebox{\dimexpr \topskip-\height}{
  \includegraphics[width=\textwidth]{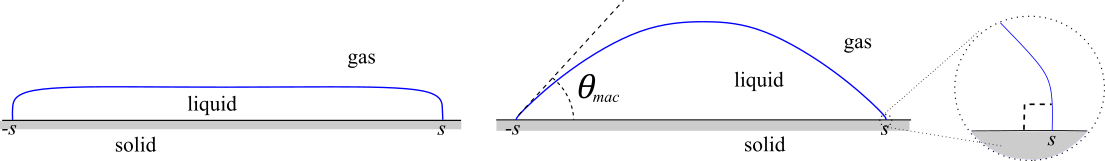}
  }
  \captionof{figure}{{\footnotesize
  Steady states under singular potentials: pancake ($S>0$, left) or droplet ($S<0$ and $e_*=+\infty$, right).}}
  \label{fig:droplet}
\end{minipage}

\medskip

If the model is assumed to hold down to $h=0$, the above characterization suffers from a limitation if $m\ge 3$. Indeed, it follows from \eqref{ss-0} that both summands in the energy, $h_x^2$ and $Q(h)$, are not integrable for $m\ge 3$. Therefore, steady states have unbounded energy if $m\ge 3$:
\begin{equation}
\label{limitation-1}
E[h_{\textsc{ss}}]=+\infty \quad\mbox{if \ $m\ge 3$.}
\end{equation}
The dual of this phenomenon is the following:
\begin{equation}\label{dual}
\begin{array}{l}\mbox{if $m\ge 3$
and $h$ has finite energy and positive mass, then $h$ can not tend to zero,}
\\
\mbox{either at any point or at infinity (cf. Lemma \ref{lem:repeat} in the Appendix).}
\end{array}
\end{equation}
This yields the variational counterpart of \eqref{limitation-1}:
\begin{equation}\label{limitation-2}
\mbox{mass-constrained minimizers of $E$ in $H^1(\R)$ do not exist if $m\ge 3$.}
\end{equation}

On the other hand, if the singularity is milder, that is if $1<m<3$, it was recently observed in \citet{DurG} that compactly supported minimizers $h_{min}$ do exist, and coincide with steady states with connected positivity set. In particular, $h_{min}$ satisfies \eqref{as-p} and \eqref{as-d}
with $e_*$ defined by \eqref{e*-DG}. In summary:
\begin{equation}\label{star}
\begin{array}{l}
\mbox{Steady states with connected positivity set exist}
\\
\mbox{for all $m>1$ and satisfy \eqref{as-p}, \eqref{as-d} and \eqref{ss-0}; however, }
\\
\mbox{their energy is finite and minimal if and only if $m<3$.}
\end{array}
\end{equation}

When $m\ge 3$, as for the van der Waals potential $P_0$, the limitation in \eqref{limitation-1}-\eqref{limitation-2} is usually handled by arguing that, at scales below a few molecules' radii (say, about ten Ångstr\"om for water), a continuum description of molecular interaction through $P_0$ may not be valid any more, since it is based on integration of binary molecular interactions. Hence the validity of a continuum description such as \eqref{TFE} is taken only up to a molecular threshold length-scale $\epsilon$ ($\epsilon^2=a^2=A/6\pi\gamma$ in \citet{DG}, \citet{gennes_hua_levinson_1990}, and \citet{LJ}).

\smallskip

On the other hand, \eqref{limitation-1}-\eqref{limitation-2} can not be ignored when one assumes $m\ge 3$ and seeks for a continuum model which consistently describes the liquid's profile {\em all the way down to $h=0$,} capturing ``pancake'' shapes in dry complete wetting ($S>0$). Unfortunately, simple fixes do not work. Indeed, either setting a virtual ``zero height'' at $\epsilon$ by the translation $\hat h =h-\epsilon$, or introducing a naive cut-off of the potential, such as $P(h)=\min\{P_0(h),P_{0}(\epsilon)\}$, make \eqref{TFE} non-singular; hence the final spreading equilibrium will not be a pancake either (mass-constrained minimizing sequences tend to zero: in more suggestive terms, the equilibrium is an unbounded layer of zero thickness). A much more ingenious fix dates back to the work of \citet{BP2} and consists in introducing a less singular ``molecular cut-off'', such as
\begin{equation}\label{P-model}
P(h)=\left\{\begin{array}{ll}
A_0 \epsilon^{m-3} h^{1-m} & \mbox{if $0<h\ll \epsilon$} \\[1ex]  P_0(h) & \mbox{if $\epsilon\ll h$}
\end{array}\right.
\quad 1< m<3,
\end{equation}
where $P_0$ is as in \eqref{vdw} and $\epsilon$ is a molecular-sized threshold length-scale.
Based on \eqref{star}, we expect that \eqref{P-model} may provide an equally effective description of droplets' profiles in the framework of van der Waals potentials, without the disadvantage of an infinite energy.

\smallskip

The goal of this manuscript is to explore, in the dynamical framework, a parallel between $m\ge 3$ and $m<3$ analogous to the one in \eqref{star}.
The preliminary matter in dynamical studies is of course that of travelling wave solutions, which will be discussed in \S \ref{s:tw}. In \S \ref{s:clc} we will identify a class of thermodynamically consistent contact-line conditions modelling contact-line friction, in the spirit of \citet{RE1,RHE,RE2}. Finally, in \S \ref{s:cop} we will draw our conclusions and present quite a few open questions. All of our observations will be supported by numerical examples.

\begin{rem}\label{rem:full}{\rm
An alternative approach to the contact-line paradox, which is quite common in the applied math community, is to take advantage of \eqref{dual}: when their validity down to $h=0^+$ is assumed, potentials which are sufficiently repulsive at $h=0^+$ and attractive at $\infty$, e.g. of the form
\begin{equation}\label{P-ra}
P(h)=B\left(h^{1-m} - h_*^{n-m}h^{1-n}\right), \quad B>0,\ \ m\ge 3, \ \ n<m
\end{equation}
yield periodic steady states which consist of arrays of droplets over a microscopic film of thickness $O(h_*)$ {\em fully} covering the substrate. Thus \eqref{P-ra} circumvents, rather than solving, the paradox. However, \eqref{P-ra} has proved to be particularly fruitful in numerical simulations and asymptotic studies, mainly in relation to dewetting phenomena: (in)stability of the flat film, bifurcation, concentration, and asymptotic scaling laws with respect to the potential's parameters,; see \cite{BGW,LP3,Getal,ORS,LW,DFHLHE}, the references therein, and \cite{W} for a recent overview. Potentials of the form \eqref{P-ra} have also been successfully employed in the analysis of the macroscopic dynamics of wetting: see e.g.  \citet{Eggers2005}, \citet{PE}, \citet{Savva2011}, and the references therein. However, this approach obviously can not capture pancake-shaped equilibria in dry complete wetting.
}\end{rem}

\begin{rem}\label{rem:pi}
{\rm
One may question whether a microscopic contact angle equal to $\pi/2$ is consistent with the small ratio of vertical vs horizontal length-scale required by lubrication approximation. In this respect, we should mention that the rigorous derivation of the thin-film equation in \citet{GO2} only requires a {\em global} smallness conditions on such ratio, in form of relations between mass, energy, and second moments (the result is proved for Darcy's flow in complete wetting, but it is plausible that similar conclusions may be drawn as well for Stokes flow and partial wetting). In fact, the validity of lubrication theory under global (hence weak) assumptions is most evident when looking at the statics: for instance, in partial wetting ($S<0$) with $P\equiv 0$ and one space dimension, it has been shown that
$$
\frac{\gamma}{\eps^2}\int_{\{h>0\}} \left((1+\eps^2 h_x^2)^{1/2}-1-\eps^2 S\right)\d x \ \stackrel{\eps\to 0}\to \   E[h] = \gamma\int_{\{h>0\}} \left(\tfrac12 h_x^2-S\right)\d x
$$
under the sole assumptions that $h\ge 0$ has finite energy, mass, and second moment, cf. \citet[(3) in Proposition 1]{GO1}. Note, however, that the finite-energy requirement is violated when $m\ge 3$. This is another indication of the necessity for a cut-off at a molecular-size length-scale $\epsilon$ when van der Waals forces dominate as $h\to 0$.
}\end{rem}

\section{Travelling waves}\label{s:tw}

As we mentioned, the idea that a film can spread because of a gradient of the disjoining pressure $\Pi=-P'$ is not new \citep{Starov,DG-CR,HDG}. In Section IV.C.3 of his review, \citet{DG} presents a heuristic analysis of advancing travelling waves for \eqref{TFE} in the case of van der Waals potentials.
We will now revisit part of his discussion in a more general way, i.e. assuming that
\begin{equation}\label{hp:p0}
P''(h)\sim A m h^{-m-1} \ \mbox{ as $h\to 0^+$,} \quad A>0, \quad m> 1, \quad P(0)=0
\end{equation}
and that
\begin{equation}\label{P-large}
P''(h)= B p h^{-1-p}(1+o(1)) \quad\mbox{as } \ h\to +\infty, \quad  p>1, \quad B\in \R.
\end{equation}
The cases $B>0$, resp. $B<0$, correspond to long-range repulsive, resp. attractive, potentials. Once again, for $m<3$ \eqref{hp:p0} may also be thought of as a cut-off of van der Waals potentials at molecular scales, such as in \eqref{P-model}. Since our focus is on the contact-line paradox, we adopt the no-slip condition, i.e. $\m(h)=\frac13 h^3$ (in this respect, see also Remark \ref{rem:n} in \S \ref{ss:global}).

\smallskip

We look for traveling-wave solutions to \eqref{TFE} with a constant speed $V$:
\begin{equation}\label{TW-ans}
h(t,x)=H(y), \quad y=x+ Vt, \quad V\in \R\setminus\{0\}.
\end{equation}
We require $H$ to display a contact line; capitalizing on translation invariance, we assume without losing generality that the contact line is located at $y=0$:
\begin{subequations}\label{TW-bc}
\begin{equation}
\label{TW-bc-0}
H(0)=0.
\end{equation}
In addition, we ask that $H$ connects to a bulk profile as $y\to +\infty$:
\begin{equation}\label{TW-bc-infty}
\supp H =[0,+\infty) \quad\mbox{and}\quad H(+\infty)=+\infty.
\end{equation}
\end{subequations}
Plugging the Ansatz \eqref{TW-ans} into \eqref{TFE} and using \eqref{def-Q} yields, in case of \eqref{h:m},
$$
3\mu V H_y + \gamma(H^3(H_{yy}-P'(H))_y)_y=0.
$$
Assuming null mass flux through the contact line gives, after an integration,
\begin{equation}
\label{TW}
U + H^2(H_{yy}-P'(H))_y=0, \qquad \mbox{where $U:= \frac{3\mu}{\gamma}V\in \R\setminus \{0\}$.}
\end{equation}
Note that $|U|$ coincides, up to a factor three, with the capillary number.
Finally, we ask $H$ to have {\em finite rate of bulk dissipation} near the contact line, in the sense that (cf. \eqref{fgf})
\begin{equation}\label{finite-dissipation}
\int_0^1 \frac{H^2}{\m(h)}V^2\d y  = \tfrac{\gamma^2}{3\mu^2}
\int_0^1 U^2 H^{-1} \d y <+\infty.
\end{equation}
For brevity, we will call {\em front} a solution to \eqref{TW}-\eqref{TW-bc} satisfying \eqref{finite-dissipation}.
Of course, as in the static case, it will be important to highlight which fronts have {\em finite energy} near the contact line, in the sense that
\begin{equation}\label{finite-energy}
\int_0^1 \left(\tfrac12 (H_y)^2 + Q(H)\right)\d y <+\infty.
\end{equation}

We will argue that singular potentials {\em generically} solve the contact-line paradox, in the following sense:

\medskip

\begin{boxlabel}
\item[{\bf (Q)}]
{\bf Quadratic fronts.} {\em Assume \eqref{def-Q}, \eqref{h:m}, \eqref{hp:p0}, and \eqref{P-large}. For any $U\in \R\setminus \{0\}$ there exists a {\em two parameter} ($a>0$, $b\in \R$), hence generic, family of fronts. They have quadratic growth as $y\to +\infty$,
\begin{equation}\label{as-i-1-1-res}
H(y)= ay^2+by + O(1) \quad\mbox{as $y\to +\infty$,}
\end{equation}
and satisfy
\begin{equation}
\label{as0}
\tfrac12 (H_y(y(H)))^2= P(H)\left(1 + c_1 H^{m-1} + o(H^{m-1})\right) \quad\mbox{as $H\to 0$,}
\end{equation}
where $O(1)$, $o(H^{m-1})$ and $c_1\in \R$ are determined by $m$, $U$, $P$, $a$, and $b$.
These fronts have finite energy if and only if $m<3$.
}
\end{boxlabel}

\medskip

\noindent This shows that, even if a no-slip condition is adopted at the contact-line, i.e. $\m(h)=\frac13 h^3$, singular potentials allow the existence of generic fronts (both advancing and receding) for any value of the normalized speed $U$. Since fronts have finite rate of dissipation,  this means that singular potentials stand as an alternative solution to the contact-line paradox. In terms of $H$, \eqref{as0} translates into
\begin{equation}\label{Hto0-res}
H(y) = \left(\tfrac{A (m+1)^2}{2(m-1)}\right)^{\frac{1}{m+1}}y^\frac{2}{m+1} (1+o(1)) \quad\mbox{as $y\to 0^+$,}
\end{equation}
which coincides with \eqref{ss-0}. Therefore,
as in the static case, fronts are compatible with a fully consistent continuum theory down to $h=0$ if $m<3$, whereas if $m\ge 3$ the energy is unbounded, and one must cut-off the fronts at a molecular length-scale $\epsilon$. In this respect, {\bf(Q)} parallels the static summary \eqref{star}.

\smallskip

Starting from the work of \citet{Voinov1976}, various formal asymptotic arguments have been developed for spreading droplets (see e.g. \citet{Greenspan,Hoc1,Cox,Ehrhard,Haley,Hoc2,BDDG,ES,Eggers2005,PE,CG}). In this framework, advancing fronts ($U>0$) are usually matched to a macroscopic profile. Such matching (parts of which were made rigourous in \citet{GO3,GGO,DelM}) requires to select those fronts which, instead of a quadratic one, display a linear (though logarithmically corrected) growth for $y\gg 1$; they are identified by
\begin{equation}
\label{H''0}
H_{yy}(y)\to 0 \quad \mbox{as $y\to +\infty$.}
\end{equation}
We will argue that, even if a no-slip condition is assumed at the contact-line, i.e. $\m(h)=\frac13 h^3$, such {\it linear-log} fronts also exist when singular potentials are adopted, for any positive speed:

\medskip

\begin{boxlabel}
\item[{\bf (L)}] {\bf Linear-log fronts.} {\em Assume \eqref{def-Q}, \eqref{h:m}, \eqref{hp:p0}, and \eqref{P-large}. For any $U>0$ there exists a {\em one-parameter} ($a\in \R$) family of
fronts $H_L$ such that \eqref{H''0} holds. They have linear-log growth in the sense that
\begin{equation}\label{as-i-2-2}
H_{y}^3(y(H)) = 3U\left(\log H -\tfrac13\log\log H +a + O(\tfrac{\log\log H}{\log H})\right) \quad \mbox{as $H\to +\infty$}
\end{equation}
and satisfy \eqref{as0}. These fronts have finite energy if and only if $m<3$.
}
\end{boxlabel}

\medskip

\noindent
Note that \eqref{as-i-2-2} does imply linear-log behavior as $y\to +\infty$:
\begin{equation}\label{as-i-2-1}
H_{y}^3(y)=3U\left(\log\left((3U)^{1/3} y\right)+ a +o(1)\right) \quad\mbox{as $y\to +\infty$}.
\end{equation}
As quadratic ones, also linear-log fronts are compatible with a fully consistent continuum theory down to $h=0$ only if $m<3$, whereas a molecular-sized cut-off is necessary if $m\ge 3$.

\begin{rem}\label{rem:n}{\rm
Since our focus is on the contact-line paradox, we have only discussed the mobility $\m(h)=\frac13 h^3$, corresponding to the no-slip condition. However, analogous arguments yield {\bf (Q)} and {\bf (L)} also for mobilities of the form $\m(h)=\frac13(h^3+\lambda^{3-n} h^n)$ with $n<3$.
}\end{rem}

In slippage models with $P\equiv 0$, a {\em single} linear-log front (and a {\em one-parameter} family of quadratic fronts) can be identified by imposing a condition on the value of the microscopic contact angle $H_y(0)$: in other words, the microscopic contact angle $H_y(0)$ may be taken as one of the parameters spanning the fronts. This additional condition on the contact angle is also necessary for uniqueness of generic solutions to \eqref{TFE} \citep{GKO,GGKO,K1,K2,KM1,KM2,Gn1,GP}. In {\bf (Q)} and {\bf (L)} above, the microscopic contact angle can not be a selection criterion, since all fronts have $H_y(0)=+\infty$. This points to the necessity of a different criterion which, for instance, singles out a unique linear-log front.

\smallskip

One possible selection criterion is the so-called {\it maximal film}, an advancing travelling wave supported in $\R$ (i.e., without a contact line). For van der Waals potentials, the maximal film has been successfully employed as a first approximation of droplets' advancing fronts for large positive spreading coefficient, where a prominent precursor region is supposed to form ahead of the macroscopic contact line. For instance, it is used by \citet{HDG} and \citet{DG} to infer the Voinov-Cox-Hocking logarithmic correction to Tanner's law \citep{Voinov1976,tanner,Cox,Hoc1,Hoc2}; see e.g. the discussion in \citet{ES}. We will argue that such maximal film exists for any $m\ge 2$:

\medskip

\begin{boxlabel}
\item[{\bf (M)}] {\bf Maximal film.} {\em Assume \eqref{def-Q}, \eqref{h:m}, \eqref{hp:p0}, and \eqref{P-large}. For any $U>0$ and $m\ge 2$, there exists a unique (up to translations) solution $H_M$ to \eqref{TW} in $\R$ such that $H_M(y)\to 0^+$ as $y\to -\infty$ and \eqref{H''0} holds. They satisfy \eqref{as-i-2-1} and
\begin{equation}\label{H-sep}
\begin{array}{ll}
H_{M}(y)\sim \left(-\tfrac{(m-2)U}{A m}y\right)^{\frac{1}{2-m}} & \mbox{ if  $m>2$}
\\[2ex]
H_{M}(y)\sim e^{\frac{U}{2A}y} & \mbox{ if  $m=2$}
\end{array}
\quad\mbox{as $y\to -\infty$},  \ \ U>0.
\end{equation}
}
\end{boxlabel}

In \S \ref{ss:as-cl}-\ref{ss:global} we provide the formal asymptotic arguments which motivate {\bf (Q)}, {\bf (L)} and {\bf (M)}; in \S \ref{ss:num} we give numerical examples supporting them; finally, in \S \ref{ss:slip} we compare them with analogous results for the slippage model.

\subsection{Asymptotics near the contact line}\label{ss:as-cl}

Note that \eqref{TW} is autonomous: as customary, it is convenient to get rid of translation invariance by exchanging dependent and independent variable, thus reducing the order of the ODE. Therefore, we let
\begin{equation}
\label{sost}
\psi(H)=\tfrac12 H^2_y(y(H)).
\end{equation}
For $H_y>0$ in a neighborhood of $H=0$, \eqref{TW} reads as
\begin{equation}
\label{TW-in2}
H^2 \psi''(H)=-\frac{U}{\sqrt{2\psi(H)}} + H^2 P''(H).
\end{equation}
At leading order as $H\to 0$, a simple asymptotic expansion using \eqref{hp:p0} shows that two cases occur:
\begin{equation}
\label{ab}
(a)\ \ \psi(H)\sim P(H);
\qquad (b)\ \ \psi(H) \sim \tfrac12\left(\tfrac{U}{H^2P''(H)}\right)^2
, \ U>0.
\end{equation}
Case $(a)$, resp. $(b)$, may be read off from \eqref{TW-in2} by neglecting the first term on the right-hand side, resp. the left-hand side.

\smallskip

We anticipate that the solutions in $(a)$ are generic and have finite rate of dissipation, whereas those in $(b)$ are non-generic and have unbounded rate of dissipation. However, the solutions in $(b)$ will capture the maximal film identified in {\bf (M)}. We thus distinguish the two cases.

\subsubsection{Case (a)}

We linearize \eqref{TW-in2} around $P(H)$: define the function $v(H)$ as
$$
\psi(H)=P(H)(1+ v(H)), \quad v(0)\stackrel{\eqref{hp:p0},(\ref{ab})}= 0.
$$
It follows from \eqref{TW-in2} that
\begin{align}\label{ancora}
L(v(H)) = -\tfrac{U}{\sqrt{2P^3(H)}}(1+v(H))^{-\frac{1}{2}},
\end{align}
where
\begin{align*}
L(v(H)) & = H^2 \left(v''(H)+2\tfrac{P'(H)}{P(H)} v'(H) + \tfrac{P''(H)}{P(H)}v(H)\right).
\end{align*}
The linearization of \eqref{ancora} around $v=0$ is given by
\begin{align*}
& H^2 v''(H) - 2(m-1)Hv'(H) + m(m-1) v(H)
\\ & \qquad = -\underbrace{\tfrac{U}{\sqrt{2P^3(H)}}} -2\underbrace{\left(\tfrac{HP'(H)}{P(H)}+(m-1)\right)}Hv'(H)+ \underbrace{\left(m(m-1)-\tfrac{H^2P''(H)}{P(H)}\right)}v(H).
\end{align*}
The left-hand side is an Euler equation, whereas the bracketed coefficients on the right-hand side are $o(1)$ as $H\to 0^+$ in view of \eqref{hp:p0}. Therefore the equation has a two-parameter family of solutions of the form
$$
v(H)=c_1 H^{m-1} + c_2 H^m +v_p(H), \quad c_1,c_2\in \R.
$$
Generically, $v_p(H)=o(H^{m-1})$ as $H\to 0^+$ (if $c_1=0$, its regularity improves) is a function determined by $U$, $P$, $m$, $c_1$, and $c_2$. In terms of $H$, this translates into \eqref{as0}.
Returning to the $y$ variable, the additional degree of freedom coming from invariance under translation $y\mapsto y-y_0$ is spent to match the condition $H(0)=0$. Therefore \eqref{as0} translates into
\begin{equation}\label{Hto0}
H(y) = \left(\tfrac{A (m+1)^2}{2(m-1)}\right)^{\frac{1}{m+1}}y^\frac{2}{m+1} (1+o(1)) \quad\mbox{as $y\to 0^+$.}
\end{equation}
Since $\frac{2}{m+1}<1$ for $m>1$, these solutions have finite rate of dissipation; in addition, they have finite energy if and only if $m<3$:
\begin{align*}
\int_0^1\left(\tfrac12 H_y^2+Q(H)\right)\d y \stackrel{\eqref{Hto0}, \eqref{hp:p0}}\approx &
\int_0^1 y^{\frac{2(1-m)}{m+1}}\d y <+\infty \quad\Leftrightarrow \quad m<3.
\end{align*}
In summary:

\medskip

\begin{boxlabel}
\item[{\bf (TW$_0$)}] {\em  Assume \eqref{def-Q}, \eqref{h:m}, and \eqref{hp:p0}.
Locally for $y\ll 1$, for any $U\in \R\setminus\{0\}$ there exists a two-parameter family of generic solutions $H$ to \eqref{TW}-\eqref{TW-bc-0} satisfying \eqref{as0} and \eqref{Hto0-res}.
Their rate of bulk dissipation is finite, in the sense that \eqref{finite-dissipation} holds; if $m<3$ their energy is also finite, in the sense that \eqref{finite-energy} holds.
}
\end{boxlabel}

\medskip

\noindent
We note for further reference that the traveling waves in {\bf (TW$_0$)} satisfy
\begin{align}
\label{as1b}
H_{yy}-P'(H) \stackrel{\eqref{sost}}= \psi'(H) -P'(H) \stackrel{\eqref{hp:p0},\eqref{as0}} =o(H^{-1}) \quad\mbox{as $H\to 0^+$}.
\end{align}

\subsubsection{Case (b)}

We recall that $U>0$ in this case. We linearize \eqref{TW-in2} around $\psi_0(H)= \frac12 \left(\frac{U}{H^2P''(H)}\right)^2$. In fact, it is convenient to write the linearization in terms of $\psi$ itself, writing $\psi=\psi_0\left(1+\left(\frac{\psi}{\psi_0}-1\right)\right)$:
$$
H^2 \psi''(H)= H^2P''(H) -\tfrac{U}{\sqrt{2\psi_0}}\left(1-\tfrac12\left(\tfrac{\psi}{\psi_0}-1\right)\right) = \tfrac12 H^2 P''(H)\left(\tfrac{\psi}{\psi_0}-1\right).
$$
In view of the asymptotic of $P$ in \eqref{hp:p0}, this means that
$$
\psi''(H) =C^2H^{1-3m}\psi(H)(1+o(1)) -\tfrac{Am}{2}H^{-m-1}(1+o(1)), \quad C=\sqrt{\tfrac{A^3m^3}{U^2}}, \quad \mbox{as $H\to 0^+$}.
$$
At leading order as $H\to 0^+$, the change of variables
$$
\psi(H)=H^{\frac12} \hat\psi(\eta), \quad \eta= DH^\beta, \quad \beta=\tfrac32(1-m)<0, \quad D=\tfrac{C}{|\beta|},
$$
leads to a non-homogeneous modified Bessel equation,
$$
\eta^2\hat\psi''(\eta) + \eta \hat\psi'(\eta) -\left(\eta^2+(2\beta)^{-2}\right)\hat\psi(\eta)=-\tfrac{Am}{2\beta^2}\left(\tfrac{|\beta|}{C}\eta\right)^{\frac{2m-1}{3(m-1)}}  \quad \mbox{as $\eta\to +\infty$}.
$$
The homogeneous solutions are spanned by modified Bessel functions \citep{bessel}: $\hat\psi(\eta)= c_1 K(\eta) + c_2 I(\eta)$, where $K=K_{(2|\beta|)^{-1}}$ and $I=I_{(2|\beta|)^{-1}}$. Since $I$ is unbounded as $\eta\to +\infty$, the condition $\psi(0)=0$ implies that $c_2=0$. Simple computations thus show that $\hat\psi(\eta)= c_1 K(\eta) + \hat\psi_p(\eta)$ with $c_1\in \R$, where the particular solution $\hat\psi_p$ is given by
$$
\hat\psi_p(\eta)= \tfrac{Am}{2\beta^2} D^{\frac{1-2m}{3(m-1)}}\left(K(\eta)\int (\eta')^{\frac{2m-1}{3(m-1)}-1}I(\eta')\d \eta' -I(\eta) \int (\eta')^{\frac{2m-1}{3(m-1)}-1}K(\eta')\d \eta'\right)
$$
(here we used that the Wronskian $KI'-IK'=\eta^{-1}$). A simple asymptotic expansion, using $I(\eta)\sim \frac{1}{\sqrt{2\pi}}\eta^{-1/2}e^\eta$ and $K(\eta)\sim \frac{\sqrt{\pi}}{\sqrt{2}}\eta^{-1/2}e^{-\eta}$ as $\eta\to+\infty$, shows that
$$
\hat\psi_p(\eta)\sim \frac{Am}{2\beta^2D^2} \left(\frac{\eta}{D}\right)^{\frac{5-4m}{3(m-1)}}\quad \mbox{as $\eta\to+\infty$}.
$$
Returning to $\psi(H)=\frac12 H_y^2(y(H))$, this means that
\begin{subequations}\label{as-zero}
\begin{equation}\label{as-zero-1}
\tfrac12 H_y^2(y(H)) = \psi_p(H) + \psi_o(H) \sim \psi_p(H) \quad\mbox{as $H\to 0^+$}, \quad c_1\in \R,
\end{equation}
where $\psi_o$ and $\psi_p$ are functions depending on $m$, $U$, and $P$; they satisfy
\begin{equation}\label{as-zero-2}
\psi_p(H) \sim \tfrac{U^2}{2A^2m^2}H^{2(m-1)}\quad\mbox{and}\quad \psi_o(H) \approx H^{\frac{3m-1}{4}}\exp\left(-\tfrac{Am\sqrt{Am}}{U|\beta|}H^{\frac32(1-m)}\right) \quad\mbox{as $H\to 0^+$}.
\end{equation}
\end{subequations}
In terms of $H(y)$ as $y\to 0^+$, for $U>0$ and $m<2$ \eqref{as-zero} translates into a one-parameter, hence non-generic, family of solutions to \eqref{TW}-\eqref{TW-bc-0} which had already been identified by \cite{BP2}: they all behave as
$$
H(y)\sim \left(\tfrac{2-m}{Am}Uy\right)^{\frac{1}{2-m}}, \ U>0, \ m<2.
$$
Note the constraint $m<2$: if $m\ge 2$, $H$ diverges as $y\to 0^+$, hence it is not admissible. It is clear that these solutions have unbounded rate of dissipation, since $\frac{1}{2-m}>1$ for $m\in (1,2)$ (cf. \eqref{finite-dissipation}).

\smallskip

For $m\ge 2$, the above argument also reveals a two-parameter (including translation, whence non-generic) family of separatrices, which may be identified by requiring that $H(y)\to 0^+$ as $y\to -\infty$. Therefore:

\medskip

\begin{boxlabel}
\item[{\bf (TW$_{-\infty}$)}] {\em  Assume \eqref{def-Q}, \eqref{h:m}, and \eqref{hp:p0}. For $U>0$ and $m\ge 2$, there exists a two-parameter (including translation) family of solutions $H_{\rm{sep}}$ to \eqref{TW} satisfying \eqref{H-sep}.
}
\end{boxlabel}

\subsection{Asymptotics in the liquid bulk}\label{ss:as-far}

We now look at the behavior of solutions to \eqref{TW} such that $H(y)\to +\infty$ as $y\to +\infty$. We will argue that:

\medskip

\begin{boxlabel} {\em
\item[{\bf (TW$_\infty$)}] Consider \eqref{TW} with \eqref{TW-bc-infty}. Assume \eqref{def-Q}, \eqref{h:m}, and \eqref{P-large}.
\smallskip
\begin{boxlabeltwo}
\item[{\bf (TW$_\infty$-Q)}] For any $U\in \R\setminus\{0\}$ there exists a generic, three-parameter (including translation) family of quadratically growing solutions:
\begin{equation}\label{as-i-1-1}
H(y)= a(y-y_0)^2+b(y-y_0) + O(y^{-\gamma}) \quad\mbox{as $y\to +\infty$,}  \quad \gamma=\min\{1,2p-2\},
\end{equation}
with $a>0$ and  $b,y_0\in\R$.
\item[{\bf (TW$_\infty$-L)}] For any $U>0$ there exists a non-generic, two-parameter (including translation) family of {\em linear-log solutions} satisfying \eqref{as-i-2-2} with $a\in \R$.
\end{boxlabeltwo}
}\end{boxlabel}

\medskip

To motivate {\bf (TW$_\infty$)}, in view of \eqref{P-large} we rewrite \eqref{TW} as
\begin{equation}\label{far}
H_{yyy} =-UH^{-2} +pBH^{-p-1}H_{y}(1+r(H)),\quad r(H)=o(1) \mbox{ \ as $H\to +\infty$}.
\end{equation}
The asymptotic expansion yielding {\bf (TW$_\infty$-Q)} is straightforward. For {\bf (TW$_\infty$-L)}, let $U>0$. The equation for
$$
u(H)=\frac{1}{3U} H_{y}^3(y(H))
$$
is
$$
u''=\frac{(u')^2}{3u}- \frac{1}{H^2} +3pBH^{-p-1}(3U)^{-2/3}u^{1/3}(1+r(H)),\quad\mbox{$r(H)=o(1)$ as $H\to +\infty$.}
$$
Following \citet{GGO}, we exploit the homogeneity of the $(B=0)$-part of the equation letting
$s=\log H$, which yields
\begin{equation}
\label{fgh}
\frac{\d^2 u}{\d s^2}= \frac{\d u}{\d s} + \frac{1}{3u}\left(\frac{\d u}{\d s}\right)^2- 1 + f(s,u), \quad f(s,u)=3pB (3U)^{-2/3} e^{s(1-p)}u^{1/3}(1+r(s)).
\end{equation}
This equation has been analysed in \citet[Section 4 and 5]{GGO}, with a slippage-type perturbation (namely, $f(s,u)= (1+e^{(3-n)s})^{-1}$) whose specific form is however immaterial as long as $f(s)=O(s^{-2}\log s)$. Their analysis shows that \eqref{fgh} with $f=0$ has a one-parameter family of solutions such that $u(s)/s\to 1$ and $u'(s)\to 1$ as $s\to +\infty$, with an asymptotic expansion of the form
$$
u(s)= \left(s-\tfrac13 \log s +a + O(s^{-1}\log s)\right) \quad\mbox{for $s\gg 1$}, \quad a\in \R.
$$
Since $f(s,u)\approx e^{s(1-p)}u^{1/3}$ and $p>1$, it is apparent that $f$ produces only an exponentially small perturbation: thus solutions to \eqref{fgh} have the same behavior, which in terms of $H=e^s$ yields \eqref{as-i-2-2}.

\subsection{Global behavior}\label{ss:global}

For the global picture, one has to make sure that local solutions are global. This is not always the case, in the sense that \eqref{TW} also has generic solutions with compact support, a feature which is common to the slippage model. However, we have strong numerical evidence that generic members of both {\bf (TW$_\infty$-Q)} and {\bf (TW$_\infty$-L)} touch down to $H=0$ at some point $y_0\in \R$ (see \S \ref{ss:num}). Capitalizing on translation invariance, one of the three parameters in {\bf (TW$_\infty$-Q)} may be used to match $H(0)=0$, yielding a two-parameter family of fronts satisfying {\bf (TW$_0$)}: this yields {\bf (Q)}.
Analogously, one of the two parameters in {\bf (TW$_\infty$-L)} may be used to match $H(0)=0$, yielding a one-parameter family of fronts satisfying {\bf (TW$_0$)}: this yields {\bf (L)}.
Finally, up to a translation, there exists a one parameter family of separatrices $H_{\rm sep}$ emanating from $-\infty$ as well as a one-parameter family of linear-log fronts emanating from $+\infty$. Since \eqref{TW} is of third order but autonomous, this entails a unique (up to translation) {\it maximal film} $H_M$ which satisfies \eqref{TW} and is such that both {\bf (TW$_{-\infty}$)} and {\bf (TW$_\infty$-L)} hold: this yields {\bf (M)}.

\subsection{Numerical observations}\label{ss:num}

In order to provide numerical evidence of {\bf(Q)}, {\bf(L)}, and {\bf(M)}, we take as prototype example
\begin{equation}\label{Q-proto}
Q(h)= \tfrac{A}{m-1}h^{1-m}-S.
\end{equation}
The reason for considering such simple model instead of, for instance, \eqref{P-model} or \eqref{P-ra}, is twofold.
Firstly, this choice is sufficient for a first numerical check on {\bf(Q)}, {\bf(L)} and {\bf(M)}, since they quantitatively depend only on the behavior of $P$ as $h \to 0^+$ (its sign and decay at infinity matters only qualitatively, independently of the power-law exponent $p$). Secondly, we can capitalize on the homogeneity of $P$ to normalize the dimensionless speed $U$ to $\pm 1$: indeed, letting
\begin{equation}
\label{scaling-m2}
H=A^\frac{1}{m-1} |U|^{-\frac{2}{3(m-1)}}\hat H, \quad y=A^\frac{1}{m-1} |U|^{-\frac{m+1}{3(m-1)}} \hat y,
\end{equation}
we may rewrite \eqref{TW} with \eqref{Q-proto} as
\begin{equation}\label{TW-m2-1}
\hat H^2(\hat H_{\hat y\hat y}+\hat H^{-m})_{\hat y}=-\frac{U}{|U|}.
\end{equation}
Of course, a quantitative investigation of the fronts' behavior in the intermediate regions and/or in terms of the parameters, will require both a more careful choice of $P$ and a more extensive numerical study, both outside the scope of this contribution. Note that a change in sign of $U$ is equivalent, in \eqref{TW-m2-1}, to a change of sign of $\hat y$, and that the left-hand side of \eqref{bc-RE-m2} is unaffacted by the latter change: hence, removing hats,  in place of \eqref{TW-m2-1} we will equivalently consider
\begin{equation}\label{TW-m2-2}
H^2(H_{yy}+H^{-m})_{y}=-1,
\end{equation}
with the understanding that

\smallskip

\begin{itemize}
\item advancing fronts ($U>0$) correspond to solutions to \eqref{TW-m2-2} with $H(0)=0$, $\supp H=[0,+\infty)$ and $H(+\infty)=+\infty$;
\smallskip
\item receding fronts ($U<0$) correspond to solutions to \eqref{TW-m2-2} with $H(0)=0$, $\supp H=(-\infty,0]$ and $H(-\infty)=+\infty$.
\end{itemize}

\smallskip

Generic solutions to \eqref{TW-m2-1} can be obtained by noting that $H$ is concave near $H=0$ (see \eqref{Hto0}) and convex near $H=+\infty$ (see \eqref{as-i-1-1} and \eqref{as-i-2-1}). Therefore there exists a point $y$ such that $H_{yy}=0$. By translation invariance, we may fix that point to be $y=1$. Shooting from $y=1$ with the two parameters $H(1)$ and $H_{y}(1)$ produces a two-parameter family of solutions, which can then be translated in $y$ to match $H(0)=0$. Consider therefore
\begin{equation}\label{qw}
1+H^2(H_{yy}+ H^{-m})_{y}=0, \quad H(1)=\alpha>0, \quad H_{y}(1)=\beta\in\R, \quad H_{yy}(1)=0.
\end{equation}
For a fixed $\alpha$, the generic picture is the following (cf. Fig. \ref{generic-n3-m2} and Fig. \ref{generic-n3-m3}):

\smallskip
\begin{itemize}
\item advancing, quadratic fronts for $\beta>\beta_0(\alpha)$;
\smallskip
\item an advancing, linear-log front $H_L$ for $\beta=\beta_0(\alpha)$;
\smallskip
\item compactly supported solutions for $\beta\in (\beta_1(\alpha),\beta_0(\alpha))$;
\smallskip
\item a separatrix $H_{\rm{sep}}$ for $\beta=\beta_1(\alpha)$;
\smallskip
\item receding (quadratic) fronts for $\beta<\beta_1(\alpha)$.
\end{itemize}

\smallskip

\noindent The picture confirms the existence of both a two-parameter family of quadratic fronts (both advancing and receding, see {\bf (Q)}) and two one-parameter families of advancing linear-log fronts (see {\bf (L)}), resp. separatrices (see {\bf (TW$_{-\infty}$)}). In this case, the two parameters which span the fronts are taken to be $\alpha=H|_{H_{yy}=0}$ and $\beta=H_y|_{H_{yy}=0}$.

\begin{minipage}[t]{1\textwidth}
\captionsetup{width=1\linewidth}
\centering\raisebox{\dimexpr \topskip-\height}{
  \includegraphics[width=0.55\textwidth]{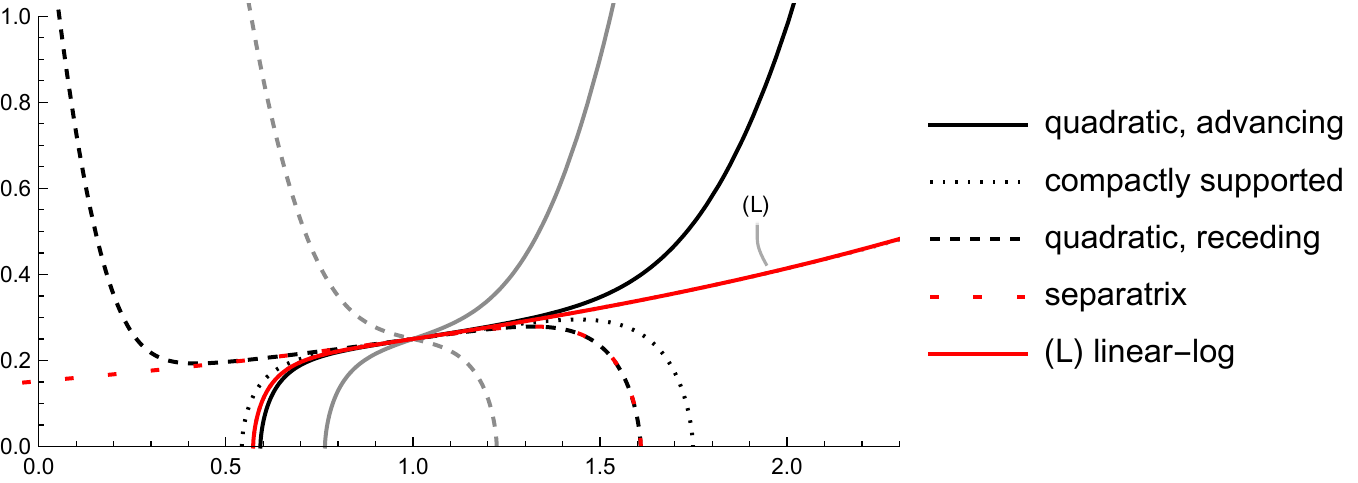}
\quad  \includegraphics[width=0.35\textwidth]{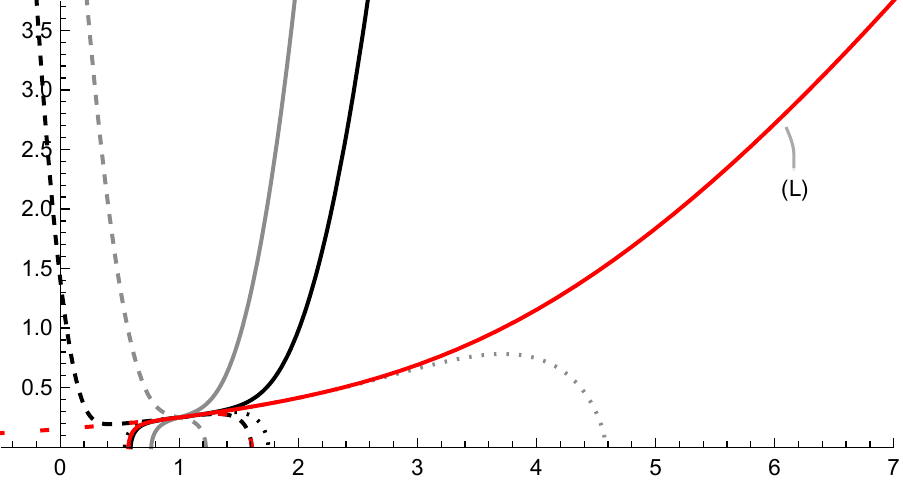}
  }
\vskip 2ex
  \centering\raisebox{\dimexpr \topskip-\height}{
  \includegraphics[width=0.55\textwidth]{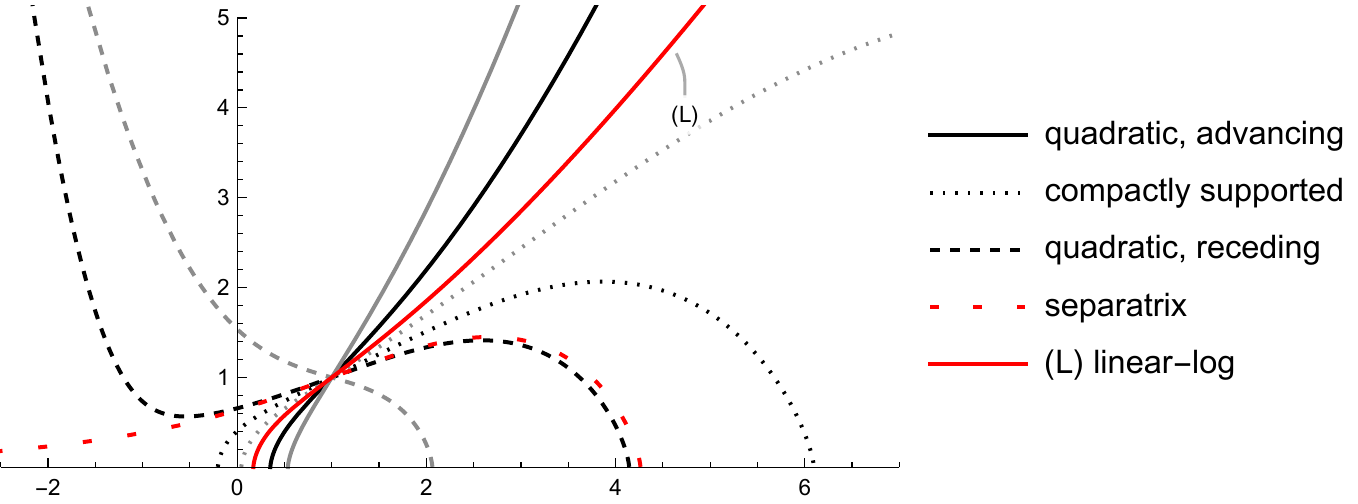}
  \includegraphics[width=0.35\textwidth]{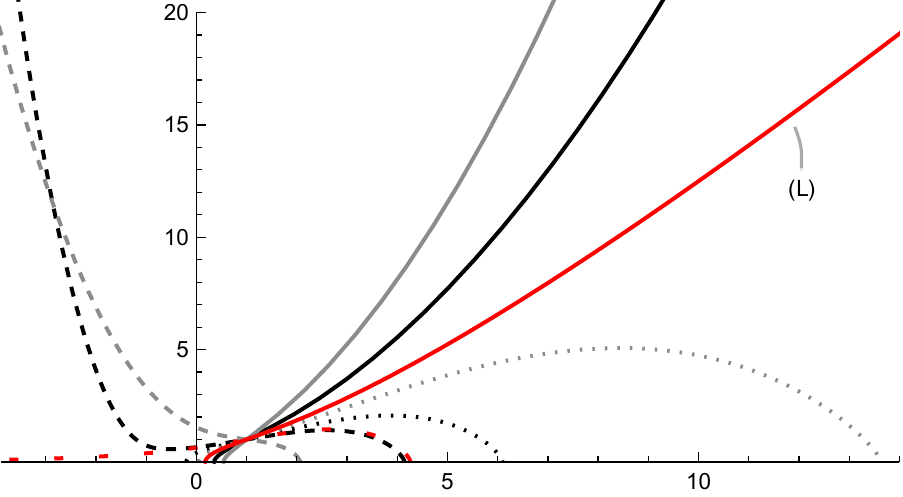}
  }
  \captionof{figure}{{\footnotesize Solutions to \eqref{qw} with $m=2$, $\alpha=1/4$ (top) and $\alpha=1$ (bottom), at two different scales. For $\alpha=1/4$ (top), the separatrix $H_{\rm{sep}}$ and the black receding front are indistinguishable for small heights, as well as the gray compactly supported solution and the linear-log front. }}
  \label{generic-n3-m2}
\end{minipage}

$\ $

\begin{minipage}[t]{1\textwidth}
\captionsetup{width=1\linewidth}
\centering\raisebox{\dimexpr \topskip-\height}{
  \includegraphics[width=0.55\textwidth]{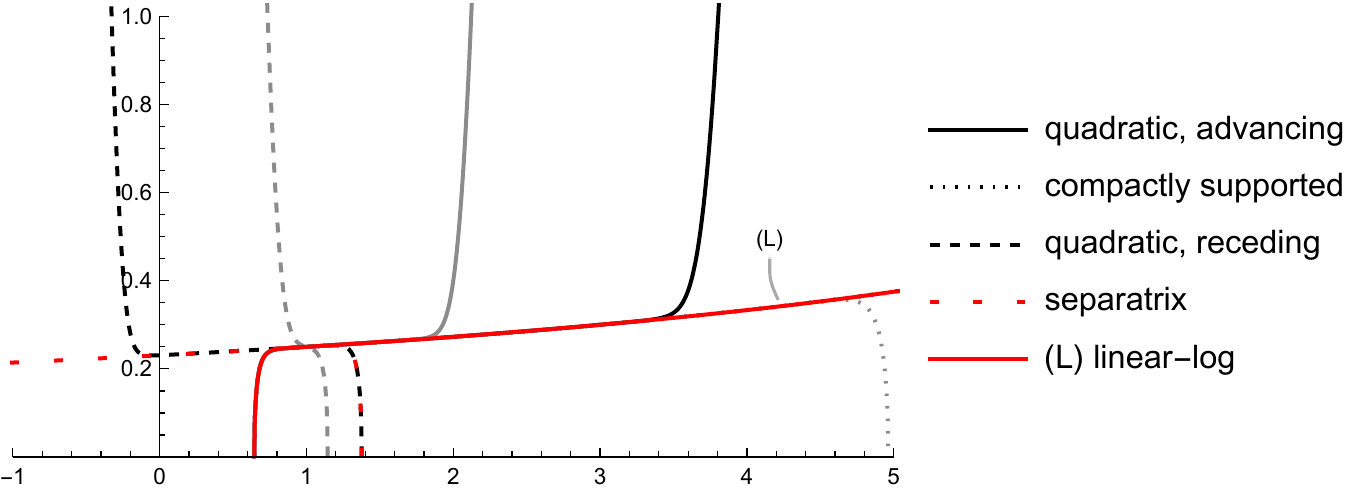}
\quad  \includegraphics[width=0.35\textwidth]{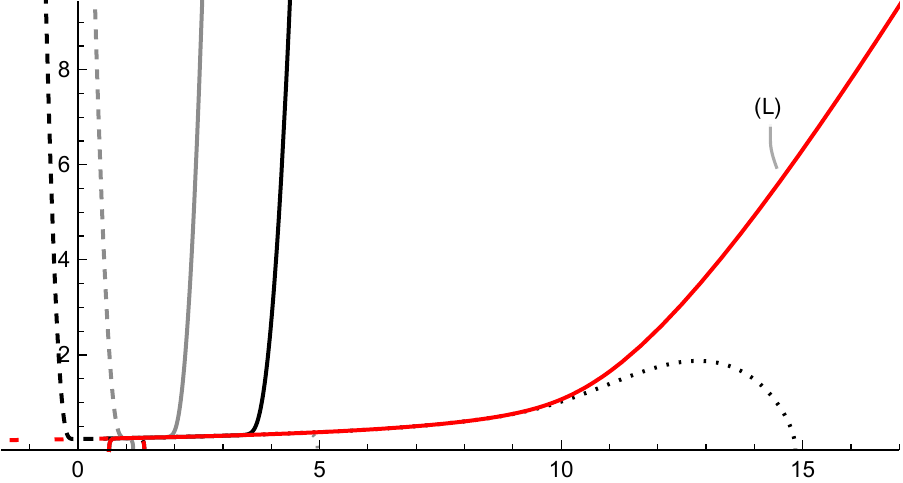}
  }
\vskip 2ex
  \centering\raisebox{\dimexpr \topskip-\height}{
  \includegraphics[width=0.55\textwidth]{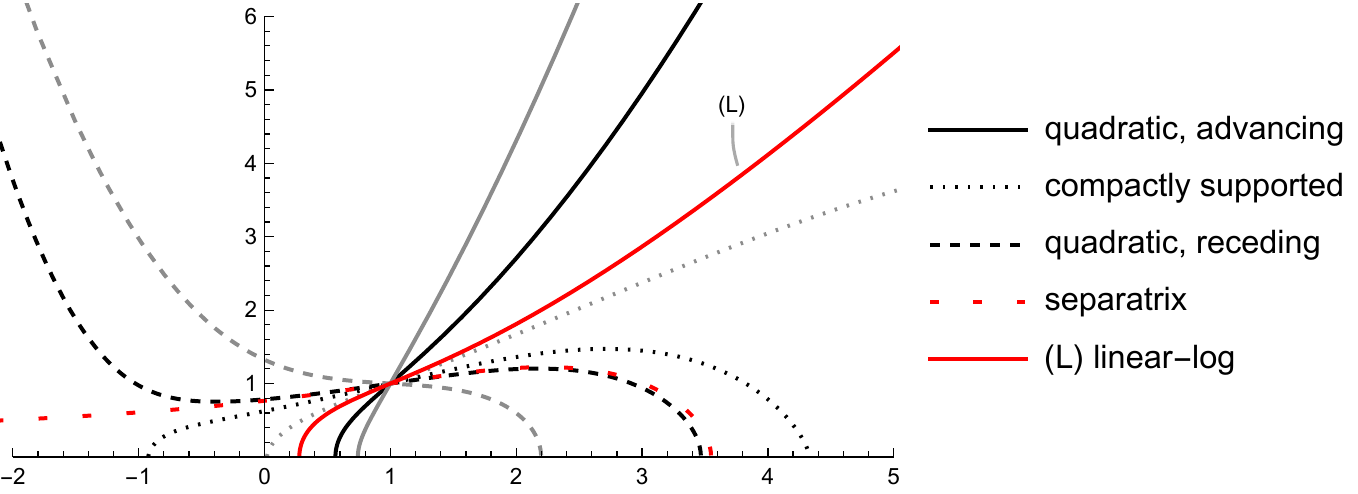}
\quad  \includegraphics[width=0.35\textwidth]{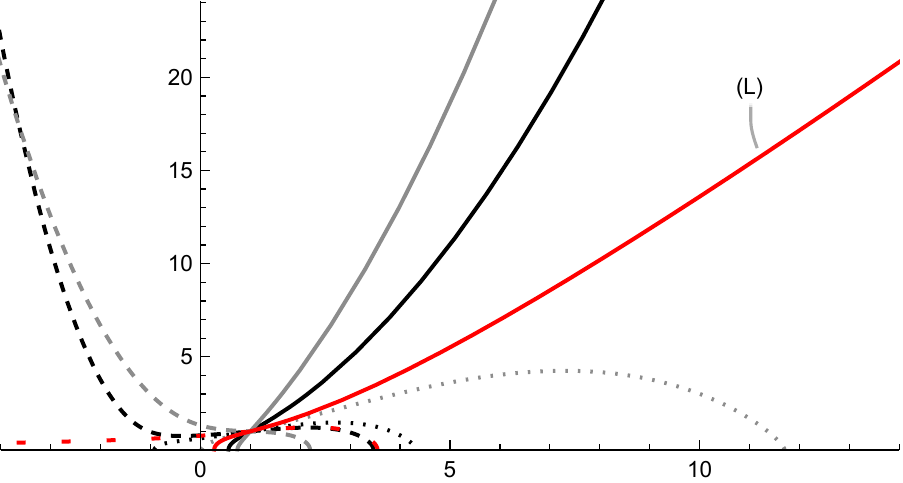}
  }
  \captionof{figure}{\footnotesize Solutions to \eqref{qw} with $m=3$, $\alpha=1/4$ (top) and $\alpha=1$ (bottom), at two different scales. For $\alpha=1/4$ (top) quadratic fronts, linear-log front, and compactly supported solutions are indistinguishable for small heights, as well as the separatrix and the black receding front. }
  \label{generic-n3-m3}
\end{minipage}

$\ $

It is interesting to compare the shapes of the linear-log fronts $H_L$ for varying values of $\alpha=H_L|_{(H_L)_{yy}=0}$. The shapes reported in Fig. \ref{tanner-n3-m2} show that $H_L$ increase as $\alpha$ increases. It also shows that a prominent precursor region forms ahead of the ``macroscopic contact line'' for small values of $\alpha$.

\medskip

\noindent \begin{minipage}[t]{1\textwidth}
\captionsetup{width=1\linewidth}
\centering\raisebox{\dimexpr \topskip-\height}{
  \includegraphics[width=0.4\textwidth]{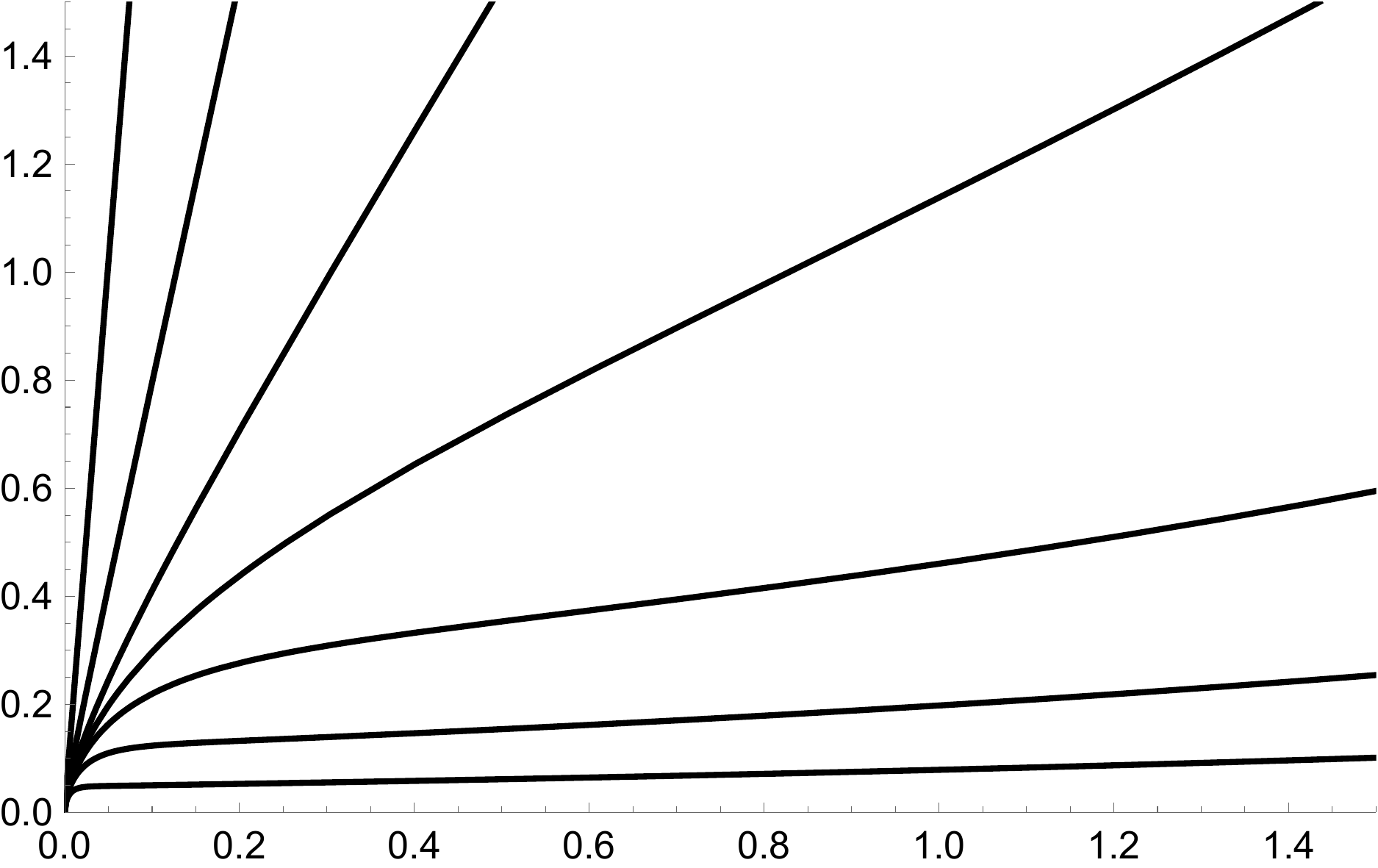}\qquad
  \includegraphics[width=0.4\textwidth]{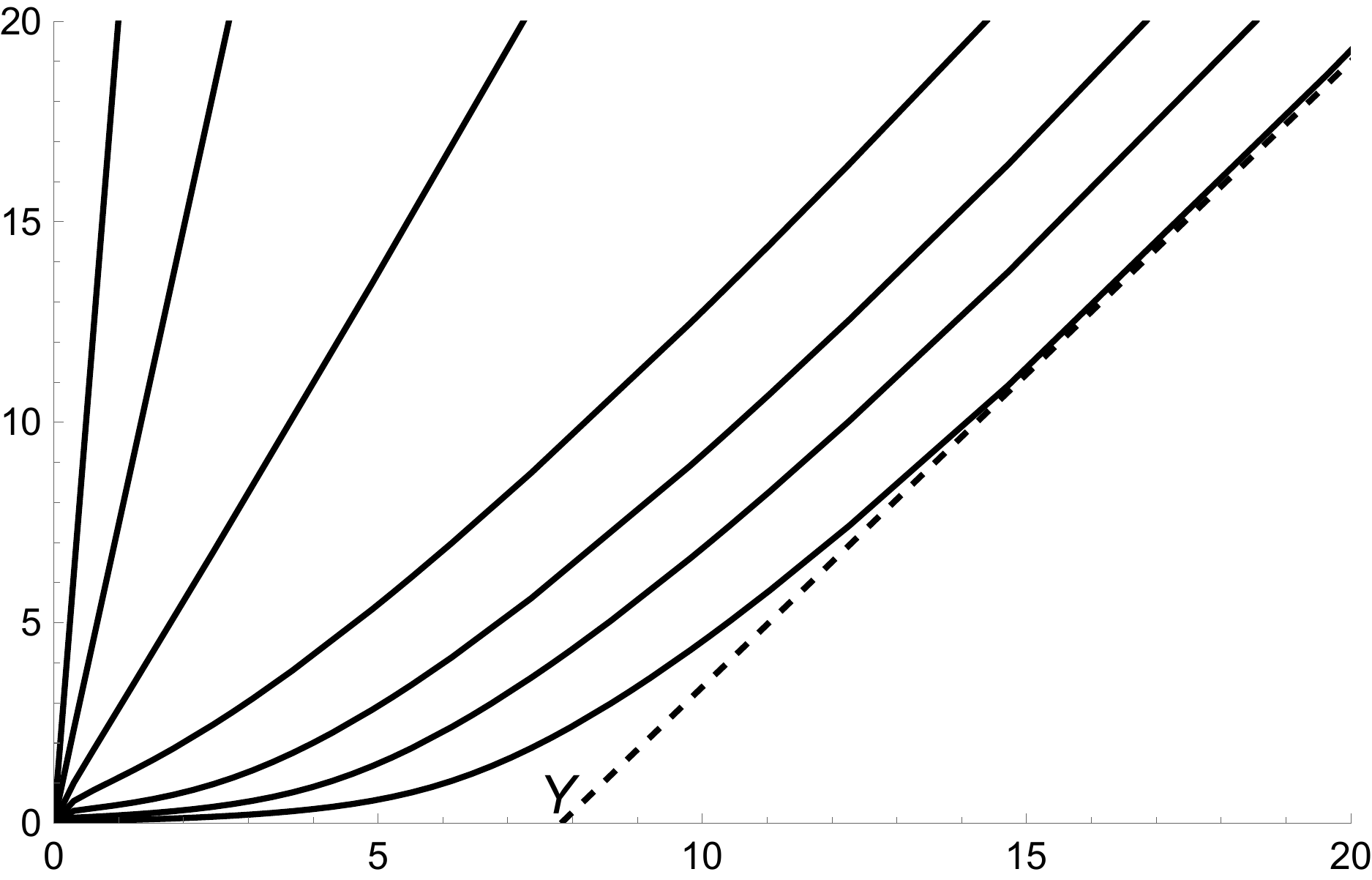}
  }
  \captionof{figure}{{\footnotesize Linear-log solutions to \eqref{qw} with $m=2$ for $\alpha=H_L|_{(H_L)_{yy}=0}=e^k$, with $k$ ranging from $-3$ (bottom) to $3$ (top), at two different scales. For $k=-3$, on the right, the ``macroscopic contact line'' $Y$.}}
  \label{tanner-n3-m2}
\end{minipage}

\medskip

Figure \ref{fig:maximal} shows the maximal film $H_M$ (see {\bf (M)}), which is obtained observing that $(H_M)_{yy}(y)\to 0$ as $y\to \pm\infty$: hence there exists $y_0\in \R$ such that $(H_M)_{yyy}(y_0)=0$, which implies that $m(H_M)_y(y_0)=H_M^{m-1}(y_0)$. Then $H_M$ is identified by shooting from $y_0$, using $H_M(y_0)$ and $(H_M)_{yy}(y_0)$ as parameters.

\medskip

\begin{minipage}[t]{1\textwidth}
\captionsetup{width=1\linewidth}
\centering\raisebox{\dimexpr \topskip-\height}{
  \includegraphics[width=0.45\textwidth]{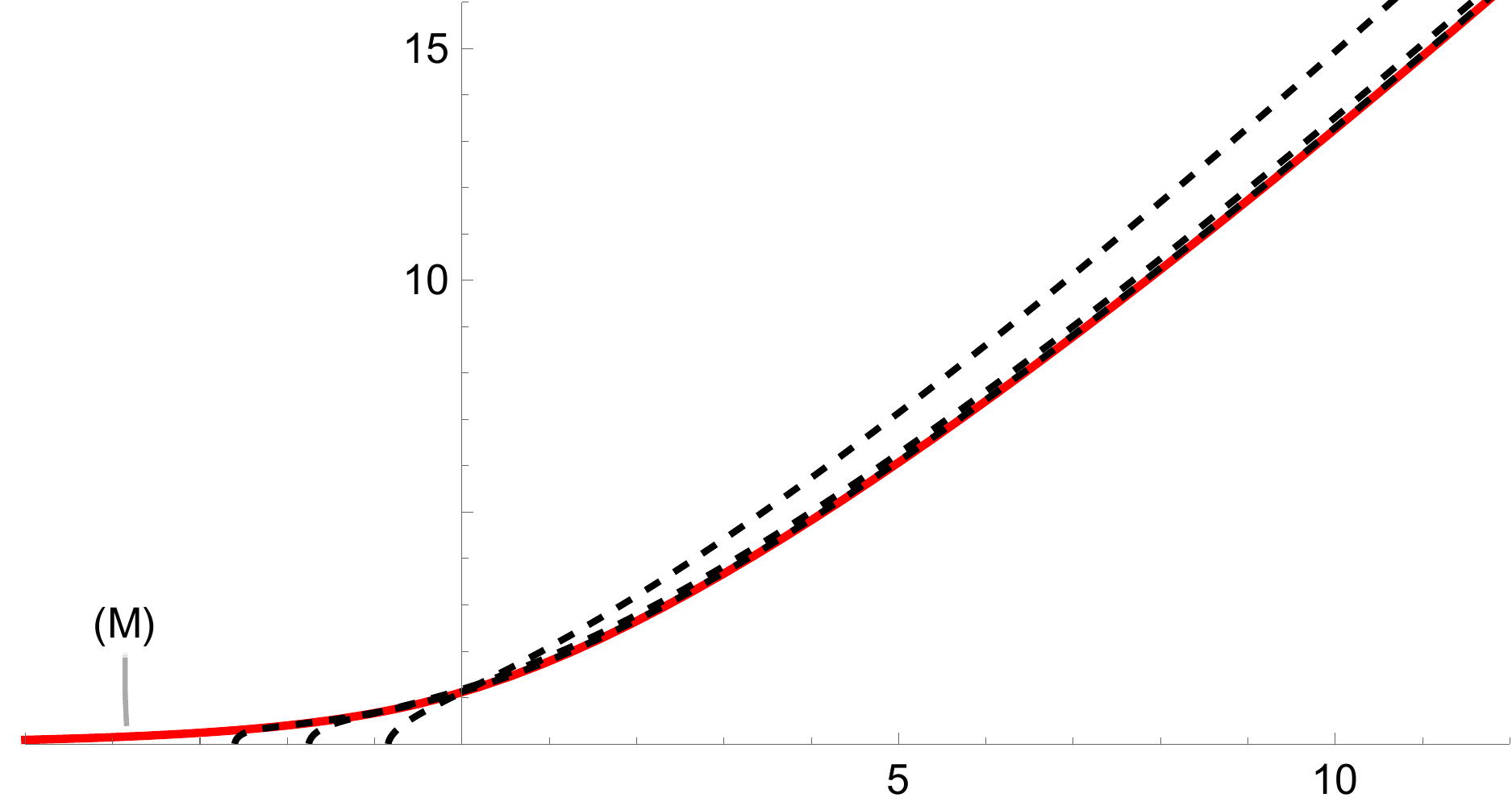}
\quad  \includegraphics[width=0.45\textwidth]{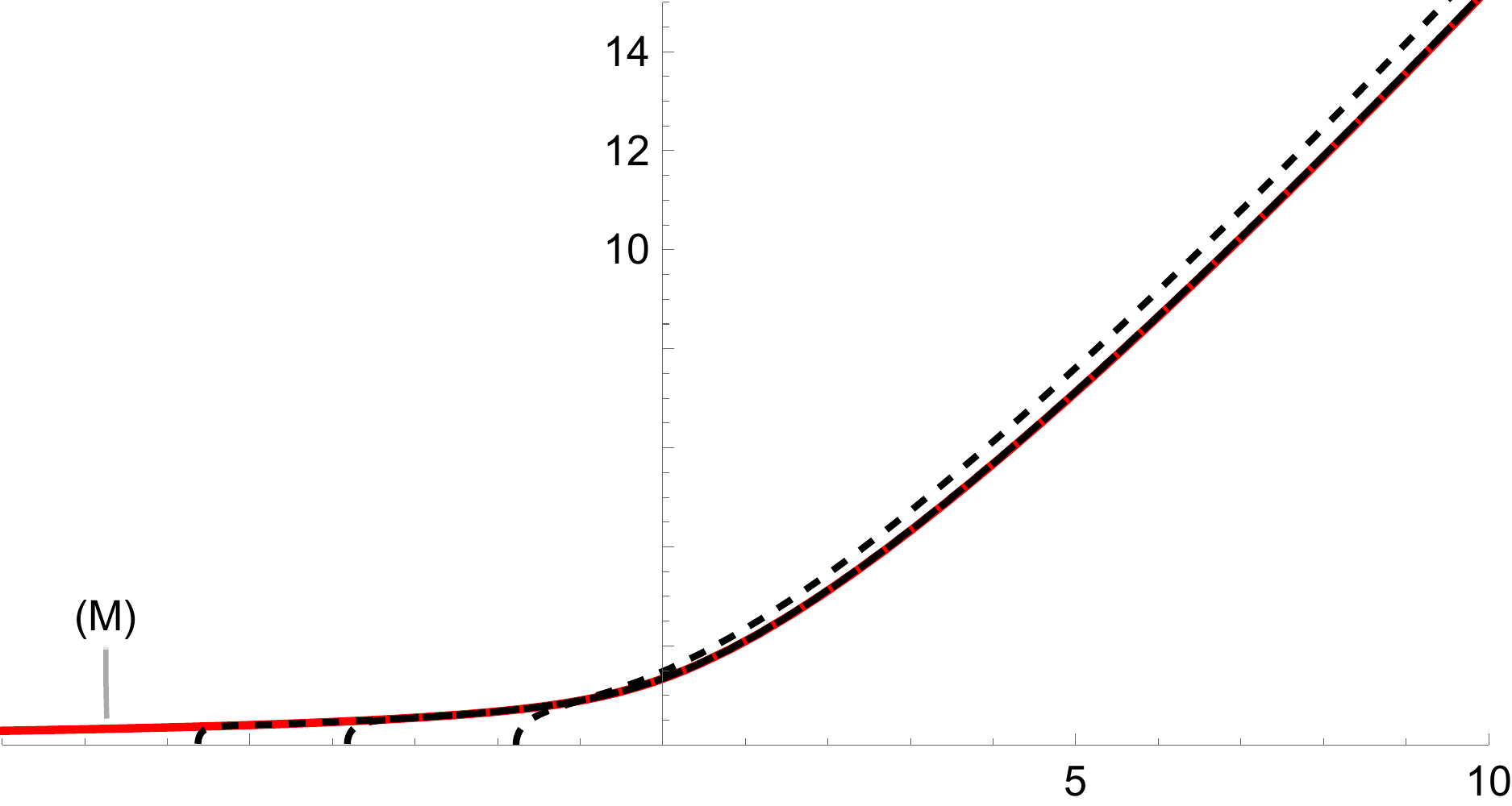}
  }
  \captionof{figure}{{\footnotesize The maximal film (solid) for $m=2$ (left) and $m=3$ (right), together with a few linear-log fronts (dashed).}}
  \label{fig:maximal}
\end{minipage}

\subsection{Comparison with slippage models}\label{ss:slip}

It is useful to compare the features in {\bf(Q)}, {\bf (L)} and {\bf (M)} with parallel ones for the case $P\equiv 0$ under slip conditions. In this case, traveling wave solutions (if they exist) solve
\begin{equation}
\label{TW-slip}
U + (H^2+ \lambda^{3-n} H^{n-1}) H_{yyy}=0
\end{equation}
with the same boundary conditions.

\smallskip

First of all, {\bf (Q)} and {\bf (L)} obviously contrasts \eqref{TW-slip} in the no-slip case $\lambda =0$. Indeed, solutions to \eqref{TW-slip} with $\lambda=0$  exist only if $U<0$ (receding), but their rate of bulk dissipation density is not integrable near $y=0$: indeed,
\begin{equation}\label{tw-noslip}
H(y)\sim \left(-\tfrac{9\mu}{\gamma}V\right)^{1/3} y\log^{1/3}\tfrac1y, \quad\mbox{hence}\quad  \frac{H^2}{\m(H)} V^2 \sim  \left(-\tfrac{3\gamma V^5}{\mu}\right)^{1/3} \frac{1}{y\log^{1/3}\tfrac1y},
\end{equation}
as $y\to 0^+$. Therefore, fronts of \eqref{TW-slip} do not exist at all if $\lambda =0$.

\smallskip

On the other hand, if $\lambda>0$ (positive slippage), computations analogous to the ones above show that the picture is very much the same as in {\bf (Q)} and {\bf (L)}: for any $U\in \R$ ($U>0$ if $H_y(0)=0$ and $n>\frac32$) there exists a two-parameter family of quadratic fronts  satisfying \eqref{as-i-1-1} (though with a different remainder); in addition, for any $U>0$ there exists a one-parameter family of linear-log fronts satisfying \eqref{as-i-2-2}-\eqref{as-i-2-1} \citep{BKO,BSB,CG0,GGO}. A prototype case is given in Fig. \ref{generic-n2-slip}: the only notable qualitative difference is that $H_{\rm{sep}}$ are compactly supported; in fact, the unique separatrix with zero microscopic contact-angle coincides with the unique linear-log front in complete wetting, thus it is the counterpart of the maximal film $H_M$ in {\bf (M)}.

\medskip

\begin{minipage}[t]{1\textwidth}
\captionsetup{width=1\linewidth}
\centering\raisebox{\dimexpr \topskip-\height}{
  \includegraphics[width=0.6\textwidth]{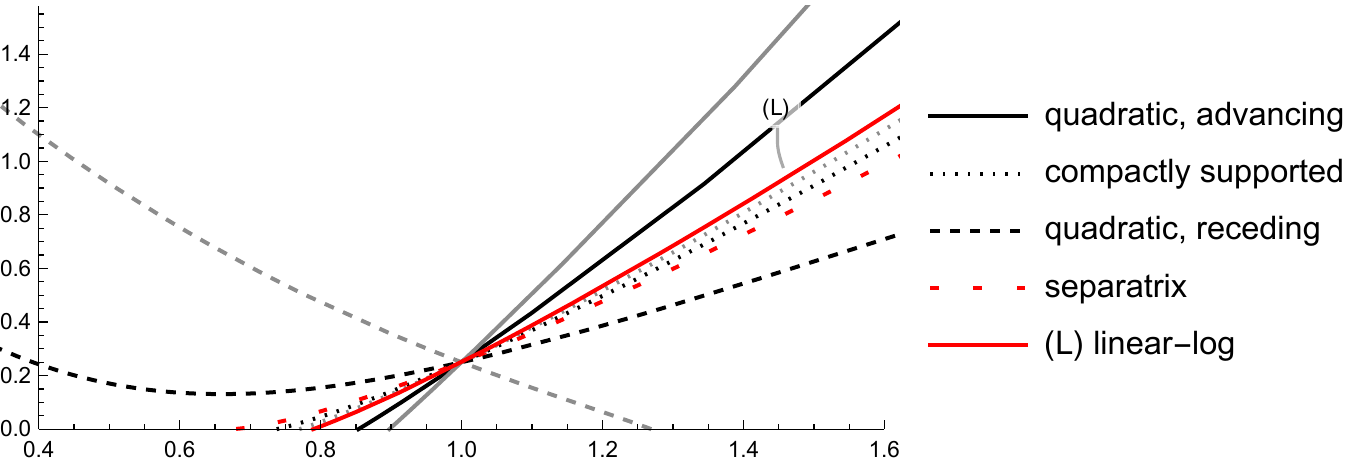}
\quad  \includegraphics[width=0.38\textwidth]{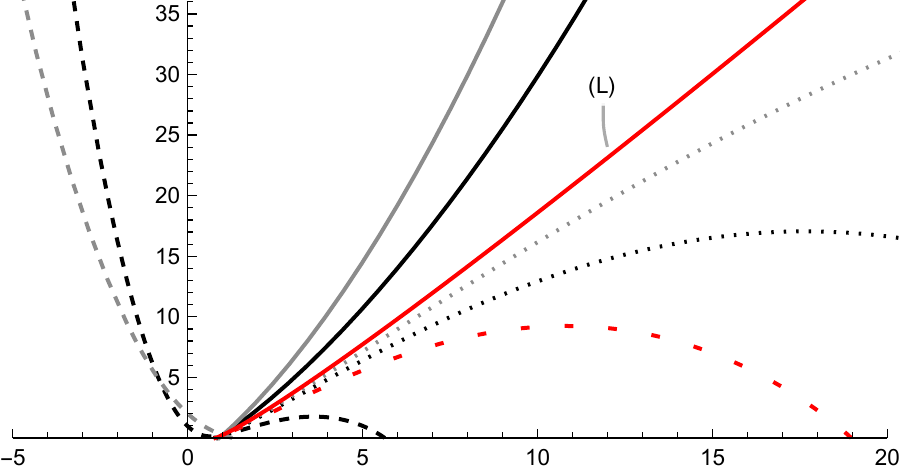}
  }
  \captionof{figure}{{\footnotesize Fronts for the thin-film equation with slippage \eqref{TW-slip} with $n=2$, $\lambda=1$, and $\alpha=H(1)=1/4$, at two different scales.}}
  \label{generic-n2-slip}
\end{minipage}

\section{Thermodynamically consistent contact-line conditions}\label{s:clc}

For exact, compactly supported solutions to the full evolution equation \eqref{TFE}, the maximal film can obviously not be taken as a selection criterion for the fronts.
In this section we will therefore identify a different criterion, which replaces contact-angle conditions in slippage models: it consists in a class of thermodynamically consistent contact-line conditions modelling friction {\em at} the contact line. The class will be identified by requiring dissipativity of the energy along the flow, in the spirit of the proposal by \cite{RE1} (see also \citet{RHE}, \citet{RE2}).

\smallskip

We assume for simplicity that $\{h>0\}$ is and remains connected for all times (i.e. we exclude coalescence or splitting of droplets):
$$
\{h>0\} = \{(t,x)\in \R_+\times\R :\ t>0, \ x\in (s_-(t),s_+(t)) \},
$$
$s_\pm(t)$ denoting the contact lines. Since $s_\pm(t)$ are unknown and \eqref{TFE} is of fourth order, three conditions are needed for well-posedness. Two of them are obvious:
\begin{equation}\label{TFE-bc}
h|_{x=s_\pm(t)}=0 \quad\mbox{and}\quad \dot s_\pm(t) = V|_{x=s_\pm(t)}.
\end{equation}
The first one defines the contact lines $s_\pm (t)$, while the second one is a kinematic condition guaranteeing no mass flux through $s_\pm(t)$. The third condition, the so-called {\it contact-line condition}, is yet debated (to a certain extent inevitably, due to the variety of material properties and configurations, which may involve surface roughness and hysteretic effects; see e.g. \citet{FK}, \citet{ADS1,ADS2}, as well as the above-mentioned reviews). The most common one amounts to prescribing a constant {\it microscopic contact angle} equal to the static one, as defined by \eqref{def-thetaS}:
\begin{equation}\label{bc-ca}
\left|h_x|_{x=s_\pm(t)}\right|=\tan\theta_S.
\end{equation}
Of particular interest to us is a relatively recent proposal by \cite{RE1} (see also \citet{RHE}, \citet{RE2}), based on consistency with the second law of thermodynamics, which also gives a robust motivation to older models by \citet{Greenspan} and \citet{Ehrhard}. In lubrication approximation \citep{CG0,CG}, when $P\equiv 0$ and $S=-\frac12\tan^2\theta_S$ (the moist case, cf. \eqref{def-thetaS}), the simplest form of the Ren-E model reads as follows$^3$\footnotetext[3]{More precisely, one should write $(h_x^2-2h h_{xx})$ in place of $h_x^2$ in the left-hand side of \eqref{bc-RE}, but it is expected that the second summand always vanishes at $x=s_\pm(t)$.}:
\begin{equation}
\label{bc-RE}
\!\!\!\!\!\pm \gamma\left(h_x^2-(\tan\theta_S)^2\right)|_{x=s_\pm(t)} = \mmu \left\{\begin{array}{lll} \dot s & \ \mbox{ if $\theta_S>0$}  & \quad \mbox{({\it partial wetting})} \\ \max\{0,\dot s\} & \ \mbox{ if $\theta_S=0$}  & \quad \mbox{({\it complete wetting})}\end{array}\right.
\end{equation}
with $\mmu>0$ a coefficient measuring friction {\em at} the contact line. Indeed, under \eqref{bc-RE}, the energy balance reads as
\begin{equation}\label{GF-stan}
\gamma \frac{\d}{\d t} \int_{s_-(t)}^{s_+(t)} \left(\tfrac12 h_x^2-S\right) \d x = -\underbrace{ \mmu (|\dot s_+|^2+|\dot s_-|^2)}_{\textrm{rate of contact-line dissipation}} - \underbrace{ \mu \int_{s_-(t)}^{s_+(t)} \frac{h^2}{\m(h)} V^2\d x}_{\textrm{rate of bulk dissipation}}
\end{equation}
(see \citet{CGw}), which shows that \eqref{bc-RE} is consistent with the second law and accounts for frictional forces {\em at} the contact line, with $\mmu \ge 0$ a friction coefficient. When $\mmu=0$ (null contact-line friction), \eqref{bc-RE} coincides with \eqref{bc-ca}.

\smallskip

We will now revisit the argument for \eqref{bc-RE} in the case of a singular potential $P$.
We base our computations on the expectation that the behavior of generic solutions coincides with that of the fronts near the contact line: letting
$$
y:=|s_\pm-x| \to 0 \quad \mbox{as $x\to s_\pm^\mp(t)$},
$$
we assume that
\begin{subequations}\label{expect}
\begin{eqnarray}
\label{expect-h}
h(t,x) &\stackrel{(\ref{Hto0})}\approx & y^\frac{2}{m+1} \quad\mbox{as $y\to 0^+$},
\\
\label{expect-hx}
h_x(t,x) &\stackrel{(\ref{as0}),(\ref{Hto0})}\approx & y^\frac{1-m}{m+1}  \quad\mbox{as $y\to 0^+$},
\\
\label{expect-V}
 V(t,x)&=& \dot s_\pm(t)(1+o(1)), \quad\mbox{as $y\to 0^+$,}
\\ \label{expect-c1}
\tfrac{1}{2}h_x^2-Q(h) &\stackrel{\eqref{as0}}=& O(1) \quad \mbox{as $y\to 0^+$,}
\\ \label{expect-pressure}
h_{xx}-Q'(h) & \stackrel{\eqref{as1b}}\approx  & o(h^{-1}) \quad \mbox{as $y\to 0^+$.}
\end{eqnarray}
\end{subequations}

Let $\eps>0$. Locally around $x=s_\pm (t)$, we may define $s_\pm^\eps(t)$ by
\begin{equation}\label{def-s}
h(t,s_\pm^\eps(t)):=\eps, \qquad\mbox{hence} \quad \left(h_t+ \dot s_\pm^\eps h_x\right)|_{x=s_\pm^\eps} =0.
\end{equation}
Using the convention $\pm a_\pm|_{x=s^\eps_\pm}:=a_+|_{x=s^\eps_+}-a_-|_{x=s^\eps_-}$ for the boundary terms, we compute:
\begin{eqnarray}
\nonumber\lefteqn{
\frac{\d}{\d t} \int_{s_-^\eps}^{s_+^\eps} \left(\tfrac12 h_x^2+Q(h)\right) \d x = \pm \dot s_\pm^{\eps} \left(\tfrac12 h_x^2+Q(h)\right)|_{x=s_\pm^\eps} +\int_{s_-^\eps}^{s_+^\eps} (h_x h_{xt}+Q'(h)h_t) \d x
}
\\
&=& \pm \dot s_\pm^\eps\left(\tfrac12 h_x^2+Q(h)\right)|_{x=s_\pm^\eps}  \pm (h_x h_t)|_{x=s_\pm^\eps}- \int_{s_-^\eps}^{s_+^\eps} (h_{xx}-Q'(h))h_t \d x \nonumber
\\ &\stackrel{(\ref{TFE})_1,(\ref{TFE-bc})}= & \pm\left[\dot s_\pm^\eps \left(\tfrac12 h_x^2+Q(h)\right) + h_x h_t \right]|_{x=s_\pm^\eps} \pm \left[(h_{xx}-Q'(h))h V\right]|_{x=s_\pm^\eps} \nonumber
\\ && - \int_{s_-^\eps}^{s_+^\eps} (h_{xx}-Q'(h))_x h V  \d x =:\pm B_{1,\eps}^\pm \pm B_{2,\eps}^\pm - \int_{s_-^\eps}^{s_+^\eps} i_{3} \d x.
\label{cv1}
\end{eqnarray}
We now notice three facts. Firstly, and crucially, the first boundary term remains bounded as $\eps\to 0$: indeed,
\begin{eqnarray}
B_{1,\eps}^\pm &\stackrel{\eqref{def-s}}= &
\dot s_{\pm}^{\eps}(t)\left(Q(h)-\tfrac12 h_x^2\right)|_{x=s^\eps_\pm(t)} \stackrel{\eqref{expect-c1}}=O(1) \quad\mbox{as $\eps\to 0$.} \label{cv2}
\end{eqnarray}
In addition, the second boundary term vanishes as $\eps\to 0$:
\begin{equation}
\label{cv3}
B_{2,\eps}^\pm\stackrel{\eqref{expect-h},\eqref{expect-V},\eqref{expect-pressure}}= o(1)
\quad\mbox{as $\eps\to 0$.}
\end{equation}
Finally, the integral on the right-hand side of \eqref{cv1} is finite: indeed,
$$
i_{3}= \tfrac{\gamma}{3\mu} h^3((h_{xx}-Q'(h))_x)^2 = \tfrac{3\mu}{\gamma} V^2 h^{-1} \stackrel{\eqref{expect-h},\eqref{expect-V}} \approx |s_\pm(t)-x|^{-\frac{2}{m+1}} \quad\mbox{as $x\to (s_\pm(t))^\mp$},
$$
which is integrable at $x=s_\pm(t)$ since $\frac{2}{m+1}<1$. Passing to the limit as $\eps\to 0$, we obtain from \eqref{def-s}-\eqref{cv3} that
\begin{equation}\label{gfpax}
\frac{\d}{\d t} E[h(t)] = \pm \gamma\dot s_\pm(t)\left(Q(h)-\tfrac12 h_x^2\right)|_{x=s_\pm(t)} - \mu\int_{s_-(t)}^{s_+(t)} \frac{h^2}{\m(h)} V^2 \d x.
\end{equation}

\begin{rem}
{\rm This formal computation is fully consistent if $m<3$. If $m\ge 3$, instead, the limit $\eps\to 0$ on the left-hand side of \eqref{gfpax} does not make sense since $E[h]\equiv +\infty$ in that case. However, \eqref{cv1} does make sense for any positive $\eps$, and the limit as $\eps\to 0$ on its right-hand side makes sense for any $m>1$.
}
\end{rem}

In order to be thermodinamically consistent, the contact-line condition has to be such that the free energy is dissipated along the flow, i.e., that the r.h.s. of \eqref{gfpax} is non-positive: this leads to the following class of contact-line conditions:
\begin{equation}\label{bc-new-gen}
\gamma \left(\tfrac12 h_x^2- Q(h)\right)|_{x=s_\pm (t)} =\pm f(\dot s_\pm)\quad\mbox{with $f$ such that $f(\dot s)\dot s\ge 0$ for all $\dot s\in \R$.}
\end{equation}
The term $f(\dot s)$ is a contribution to the dissipation which is concentrated at the contact line: in the description of \citet{Bonn}, it corresponds to the term $W_m(U)$ in formula (75). Recalling that $Q(h)=P(h)-S$, we see that the spreading coefficient $S$ enters the contact-line conditions \eqref{bc-new-gen}-\eqref{bc-RE-s-bis} in an essential way, in analogy with the contact-line conditions \eqref{bc-ca}-\eqref{bc-RE} for the slippage model.

\smallskip

The simplest choice of $f$ is a linear relation $f(\dot s)=\mmu \dot s$ ($\mmu\geq 0$), in analogy with \eqref{bc-RE}; it leads to
\begin{equation}
\label{bc-RE-s-bis}
\gamma \left(\tfrac12 h_x^2-Q(h)\right)|_{x=s_\pm(t)} = \pm \mmu \dot s_\pm(t).
\end{equation}
Substituting \eqref{bc-RE-s-bis} into \eqref{gfpax} we obtain an energy balance analogous to the one in \eqref{GF-stan}:
\begin{equation}\label{GF-new}
\frac{\d}{\d t} \int_{s_-(t)}^{s_+(t)} \gamma \left(\tfrac12 h_x^2+Q(h)\right) \d x = -\underbrace{ \mmu (|\dot s_+|^2+|\dot s_-|^2)}_{\textrm{rate\ of\ contact-line\ dissipation}} - \underbrace{ \mu\int_{s_-(t)}^{s_+(t)} \frac{h^2}{\m(h)} V^2 \d x}_{\textrm{rate\ of\ bulk\ dissipation}},
\end{equation}
which encodes a quadratic dissipation of kinetic energy through frictional forces acting {\em at} the contact line. Note that \eqref{GF-new} coincides with \eqref{fgf} if $\mmu=0$.

\smallskip

Let us comment on the contact-line condition \eqref{bc-RE-s-bis}. Its left-hand side is zero for the global minimizers $h_{min}$ discussed in \S \ref{ss:statics} (see \citet[Theorem 4.5]{DurG}). Also, its left-hand side is well defined on the fronts. Indeed, though both summands are unbounded as $x\to s_{\pm}(t)$,  their difference is not:
\begin{equation}
\label{as1-}
\left(\tfrac{1}{2}H_y^2-Q(H) \right)\stackrel{\eqref{def-Q},\eqref{hp:p0},\eqref{as0}}\sim \tfrac{A}{m-1} c_1 + S   \quad\mbox{as $H\to 0$}.
\end{equation}
For traveling waves with constant speed $V= \frac{\gamma}{3\mu}U$, $s_-(t)=-Vt$ and the contact-line condition \eqref{bc-RE-s-bis} reads as
\begin{equation}
\label{as1}
\Theta[H]:=\lim_{y\to 0^+}\left(\tfrac{1}{2}H_y^2(y)-P(H(y))\right) = \tfrac{\mmu}{3\mu} U-S.
\end{equation}
In view of \eqref{as1-}-\eqref{as1}, we expect that:

\medskip

\begin{boxlabel}
\item[{\bf (S)}] {\bf Selection criterion.} {\em Assume \eqref{def-Q}, \eqref{h:m}, \eqref{hp:p0}, and \eqref{P-large}. Let $\mmu\ge 0$.

\smallskip

\begin{boxlabeltwo}

\item[{\bf (1)}] For any $U\in \R\setminus \{0\}$, \eqref{TFE} has a {\em one-parameter} family of quadratic fronts $H$ (see {\bf (Q)}) such that \eqref{as1} holds;

\smallskip

\item[{\bf (2)}] For any $U>0$, \eqref{TFE} has a {\em unique} linear-log front $H_L$ (see {\bf (L)}) such that \eqref{as1} holds.

\end{boxlabeltwo}
}
\end{boxlabel}

\smallskip

To support the choices of both \eqref{bc-RE-s-bis} as free boundary condition and $\Theta[H]$ as selection parameter, we report numerical values of $\Theta[H_L]$ as computed for the rescaled equation \eqref{TW-m2-2}: under \eqref{scaling-m2}, the rescaled version of \eqref{as1} is, after removing hats,
\begin{equation}
\label{bc-RE-m2}
\Theta[H]=\lim_{y\to 0^+}\left(\tfrac12 H_{y}^2(y)- \tfrac{1}{m-1} H^{1-m}(y)\right)=|U|^{-2/3}\left(\tfrac{\mmu}{3\mu}U-S\right).
\end{equation}
It is apparent  from Fig. \ref{fig:Theta}(A) that $\Theta[H_L]$ monotonically covers the whole real line as linear-log solutions are spanned: in particular, a unique linear-log front can be selected such that the contact-line condition \eqref{bc-RE-m2} holds. This confirms the expectation in {\bf (S2)}. For completeness, in Fig. \ref{fig:Theta}(B) we also report numerical values of the separatrix $H_{\rm{sep}}$ which discriminates between receding and compactly supported solutions. It is apparent that $\Theta[H_{\rm{sep}}]$ increases with $\alpha$ and diverges to $-\infty$ as $\alpha\to 0^+$.

\medskip

\noindent \begin{minipage}[t]{1\textwidth}
\captionsetup{width=1\linewidth}
\centering\raisebox{\dimexpr \topskip-\height}{
  \includegraphics[width=0.4\textwidth]{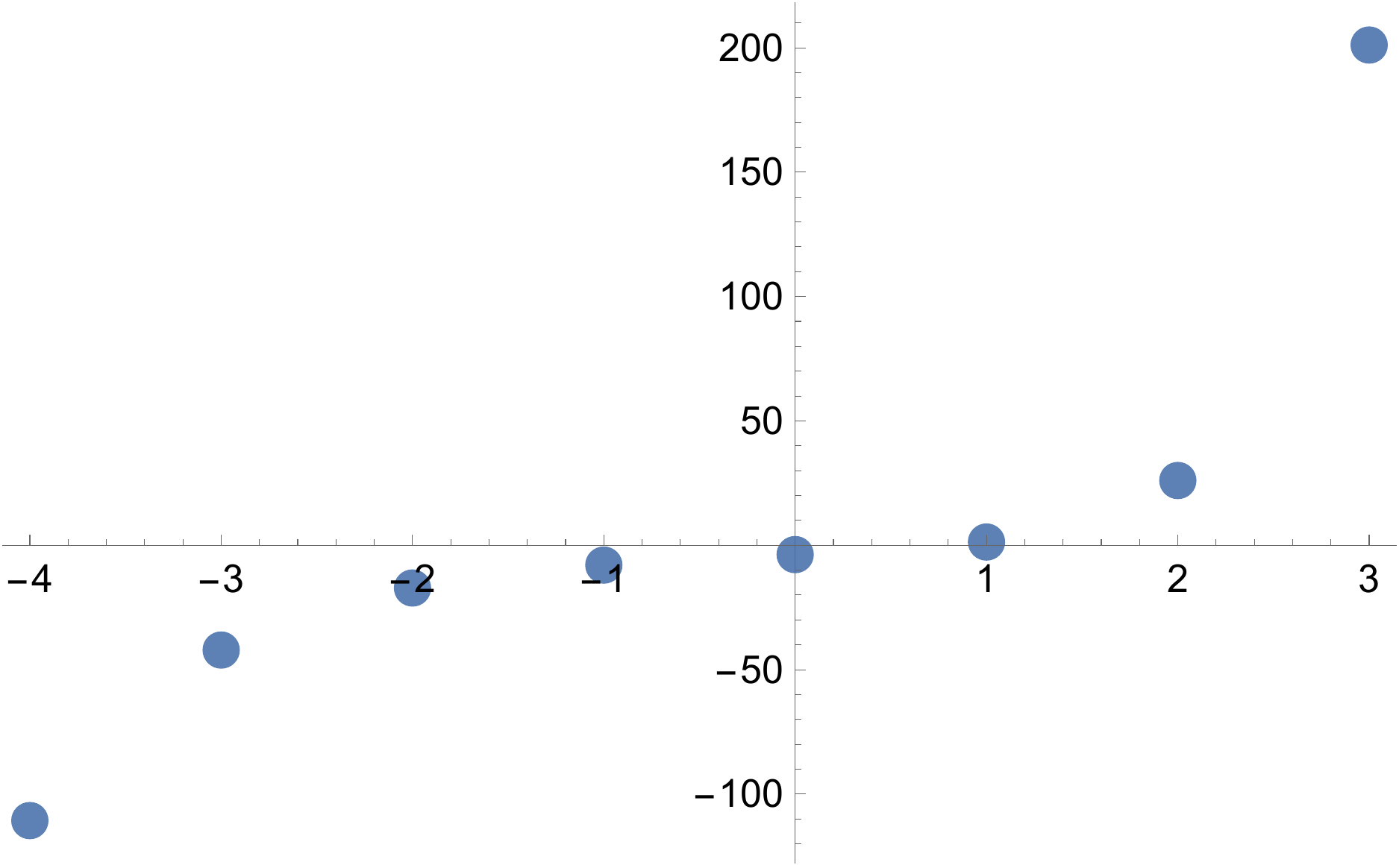}
\qquad\qquad  \includegraphics[width=0.4\textwidth]{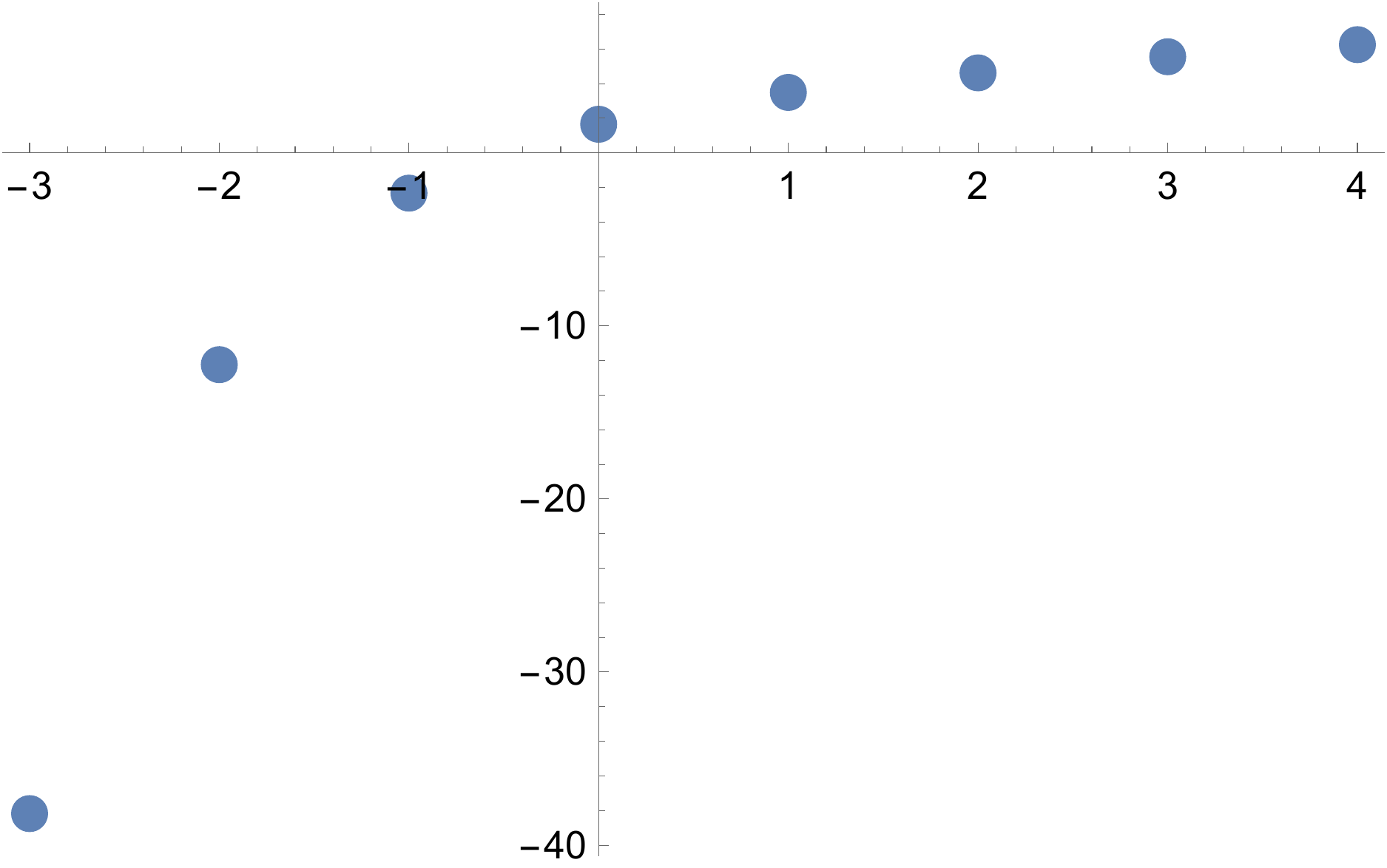}
  }
  \captionof{figure}{{\footnotesize For \eqref{TW-m2-2} with $m=2$: {\bf (A)} On the left, the values of $\Theta[H_L]$ versus $k=\log ({H_L}|_{(H_L)_{yy}=0})$. {\bf (B)} On the right, the values of $\Theta[H_{\rm{sep}}]$ versus $k=\log ({H_{\rm{sep}}}|_{(H_{\rm{sep}})_{yy}=0})$.
}}
  \label{fig:Theta}
\end{minipage}

\medskip

Combining Fig. \ref{fig:Theta}(A) with Fig. \ref{tanner-n3-m2}, it is also apparent that, as $\Theta[H_L]$ decreases, a more prominent precursor region forms ahead of the macroscopic contact line (Fig. \ref{fig:ar}). Thus, if \eqref{bc-RE-m2} is assumed as a contact-line condition, we expect that $H_L$ matches the following intuitive properties:

\smallskip

\begin{itemize}
\item for fixed $\mmu$ and $S$, a greater speed $U$ yields steeper profiles of $H_L$;

\smallskip

\item for fixed $U$ and $S$, a greater contact-line friction $\mmu$ yields steeper profiles of $H_L$;

\smallskip

\item for fixed $\mmu$ and $U$, a larger positive spreading coefficient $S$ yields gentler profiles of $H_L$.

\end{itemize}

In particular, under the contact-line condition \eqref{bc-RE-m2}, numerics suggest that larger positive spreading coefficients $S$ yield more prominent precursor regions ahead of the macroscopic contact line (Fig. \ref{fig:ar}). This agrees with the discussion in \citet{DG} for $m=3$, which is instead based on perturbations of the maximal film.

\medskip

\noindent \begin{minipage}[t]{1\textwidth}
\captionsetup{width=1\linewidth}
\centering\raisebox{\dimexpr \topskip-\height}{
  \includegraphics[width=0.4\textwidth]{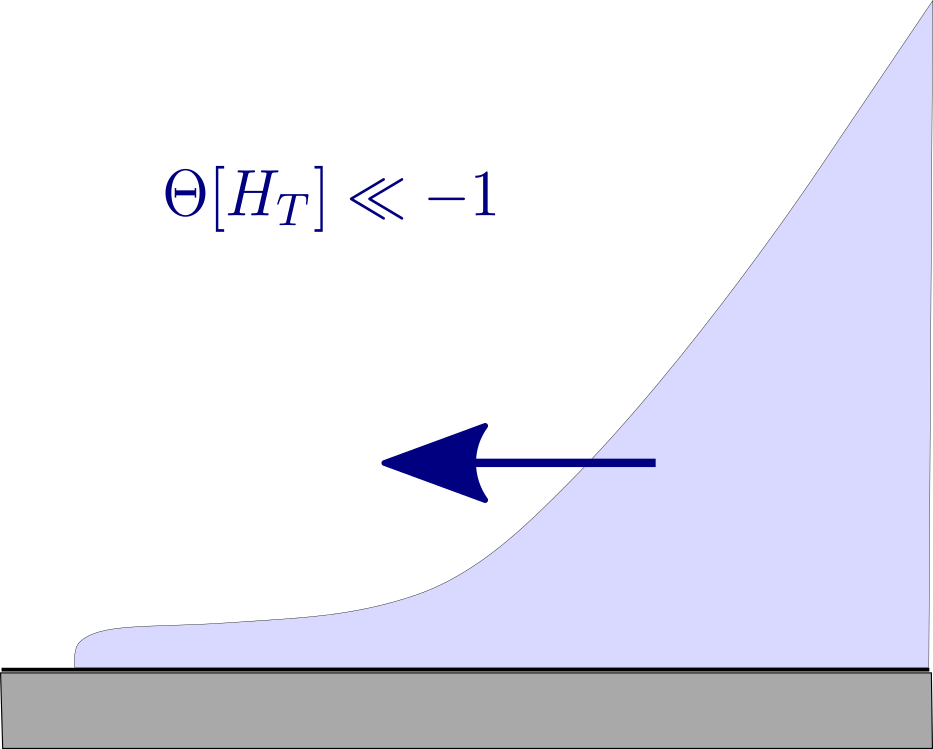}
  \qquad
  \includegraphics[width=0.4\textwidth]{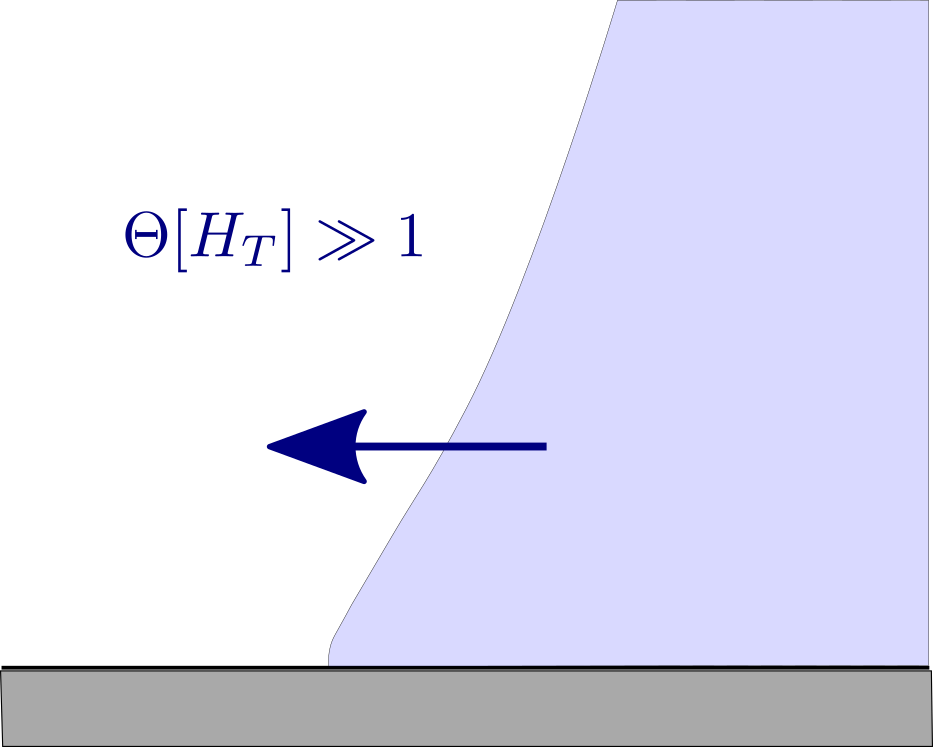}\qquad
  }
  \captionof{figure}{{\footnotesize Typical linear-log fronts $H_L$ depending on $\Theta[H_L]$. Under the contact-line condition \eqref{as1}, for given speed $U>0$ and contact-line frictional coefficient $\mmu>0$, these are typical fronts for large and positive (left), resp. large and negative (right), values of the spreading coefficient $S$ (``dry'' complete wetting, resp. partial wetting).}}
  \label{fig:ar}
\end{minipage}

\medskip

When $H_{y}|_{H_{yy}=0}>(H_L)_{y}|_{(H_L)_{yy}=0}$, resp. $H_{y}|_{H_{yy}=0}<(H_{\rm{sep}})_{y}|_{(H_{\rm{sep}})_{yy}=0}$, fronts are advancing, resp. receding, and have a quadratic profile for large $y$. In Fig. \ref{fig:Theta-gen} we report numerical values of $\Theta[H]$ for such solutions. There, it is apparent that, for each value of $\alpha=H|_{H_{yy}=0}$, $\Theta[H]$ diverges to $+\infty$ as $|H_{y}||_{H_{yy}=0}$ does. Since $\Theta[H_L]$, resp. $\Theta[H_{\rm{sep}}]$, diverge to $-\infty$ as $H|_{H_{yy}=0}\to 0$, (Fig. \ref{fig:Theta}), $\Theta[H]$ covers the whole real line as advancing, resp. receding, traveling waves are spanned, thus confirming {\bf (S1)}. There is, however, a difference between advancing and receding fronts, since receding ones may be non-monotonic (Figg. \ref{generic-n3-m2} and \ref{generic-n3-m3}); note that the same happens for slippage models (Fig. \ref{generic-n2-slip}). This reflects into a lack of monotonicity of $\Theta[H]$ for fixed $\alpha$ on receding waves. Note that the minimum of $\Theta[H]$ appears to be on the left half-plane, hence the corresponding waves have negative derivative at their inflection point, thus they are monotonic. Therefore the branch of receding fronts emanating from $\Theta[H]=+\infty$ consists of monotone ones. This phenomenon might be related to the existence of a ``limiting speed'' of receding fronts calculated by \citet{Eggers-forced,Eggers20051} in the slippage case.

\medskip

\noindent \begin{minipage}[t]{1\textwidth}
\captionsetup{width=1\linewidth}
\centering\raisebox{\dimexpr \topskip-\height}{
  \includegraphics[width=0.45\textwidth]{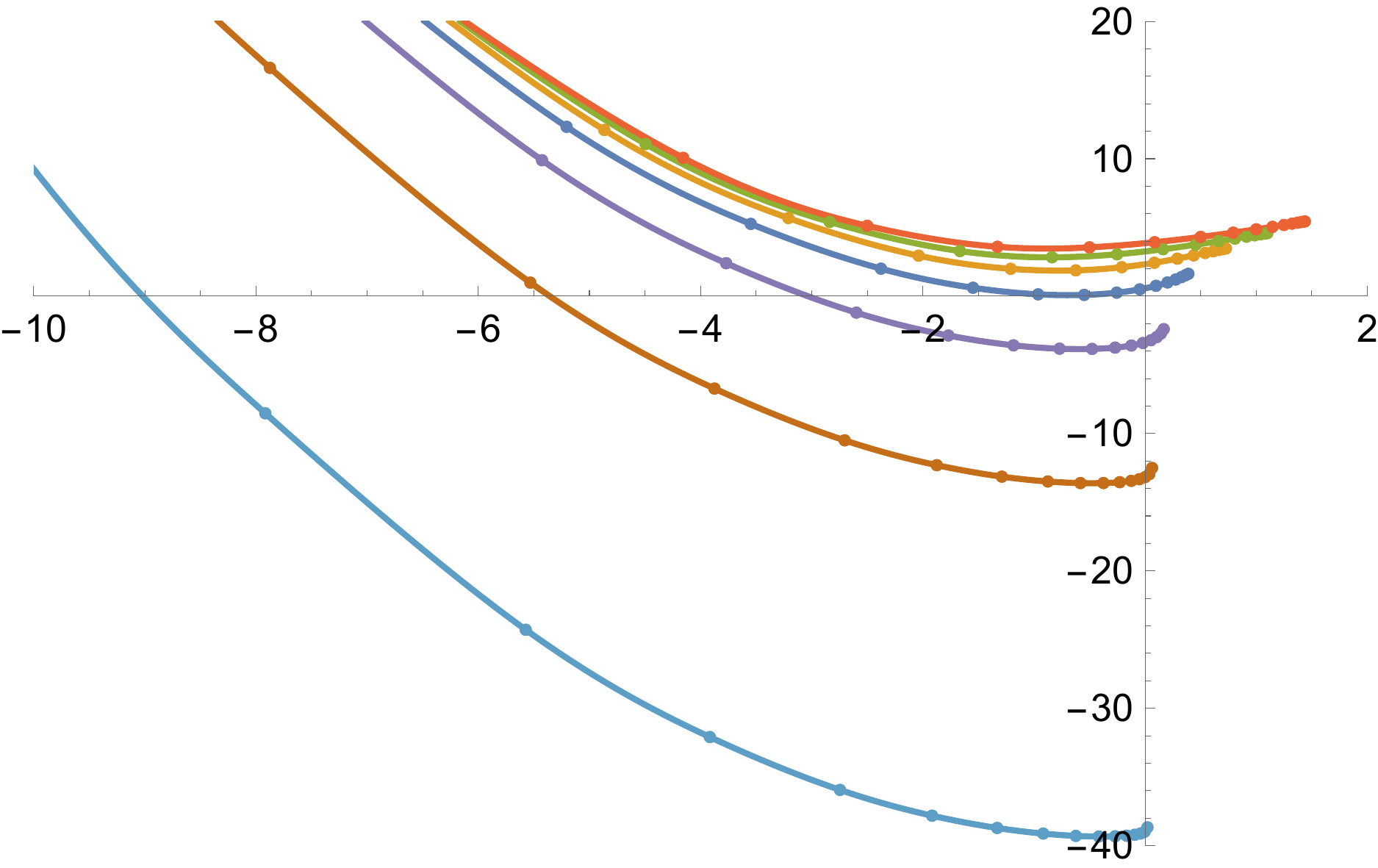}
\qquad  \includegraphics[width=0.45\textwidth]{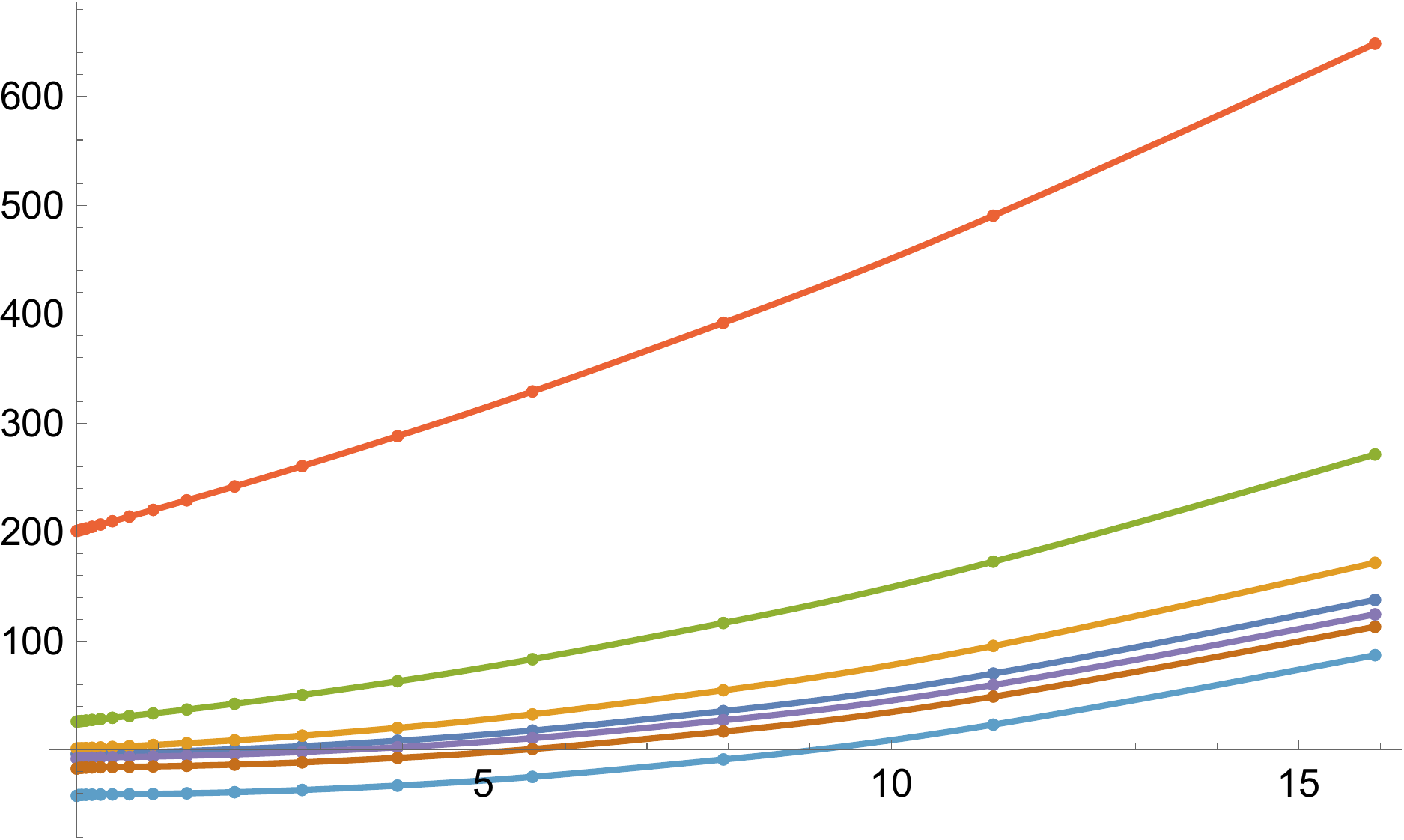}
  }
  \captionof{figure}{{\footnotesize For $m=2$: On the left, the value of $\Theta[H]$ versus ${H_{y}}|_{H_{yy}=0}$ for receding fronts. On the right, the value of $\Theta[H]$ versus ${H_{y}}|_{H_{yy}=0}-(H_L)_{y}|_{(H_L)_{yy}=0}$ for advancing fronts. In both cases, $k=\log H|_{H_{yy}=0}$ ranges from $-3$ (bottom) to $3$ (top).
}}
  \label{fig:Theta-gen}
\end{minipage}

\section{Conclusions and open questions}\label{s:cop}

We have discussed thin-film models under singular potentials:
\begin{equation}\label{concl1}
h_t + \tfrac{\gamma}{3\mu}(h^3(h_{xx}-P'(h))_x)_x=0
\end{equation}
with
\begin{equation}\label{concl2}
P(h) \sim \tfrac{A}{m-1} h^{1-m} \quad\mbox{as $h\to 0^+$ } \ \mbox{ with \ $m>1$,}\quad A>0, \quad P(0)=P(+\infty)=0.
\end{equation}
Based on formal arguments supported by numerical evidence, we have argued that singular potentials generically solve the contact-line paradox, in the sense that \eqref{concl1} admits for any value of $m>1$:

\medskip

\begin{boxlabel}
\item[{\bf (Q)}] a two-parameter family of both advancing and receding traveling-waves with finite rate of dissipation at the contact line;
\item[{\bf (L)}] a one-parameter family of advancing ``linear-log'' travelling waves displaying a logarithmically corrected linear behavior in the bulk, also with finite rate of dissipation at the contact line.
\end{boxlabel}

\medskip

In agreement with mass-constrained steady states, travelling waves have finite energy if and only if $m<3$, whereas for $m\ge 3$ a cut-off at a molecular-size length-scale is necessary (cf. e.g. \eqref{P-model}). However, the qualitative properties of the waves are the same for any $m>1$. Our formal arguments also suggest that for $m\ge 2$ and any positive speed there exists:

\medskip

\begin{boxlabel}
\item[{\bf (M)}] a unique maximal film, i.e., an advancing linear-log travelling wave $H(y)$ which decays to zero as $y\to -\infty$ instead of touching down to zero at a finite point.
\end{boxlabel}

\medskip

Intermolecular potentials thus stand as a possible solution to the contact-line paradox, alternative to the most common one, given by slippage models:
\begin{equation}\label{concl4}
h_t + \tfrac{\gamma}{3\mu}((h^3+\lambda^{3-n} h^n)h_{xxx})_x=0, \qquad \lambda>0, \ n\in (0,3).
\end{equation}
For equation \eqref{concl4}, the classification of travelling waves is analogous to {\bf (Q)} and {\bf (L)} (cf. \S \ref{ss:slip}); however, the microscopic contact angle $h_x|_{\{h=0\}}$ may be used as a parameter to span them, thus selecting a unique advancing linear-log front. This is impossible for \eqref{concl1}-\eqref{concl2}, since in that case the microscopic contact angle is always $\pi/2$. Here, we have also proposed a class of thermodynamically consistent contact-line conditions, which replaces contact-angle ones and is expected to single out a unique advancing linear-log front with a contact line. The simplest among such conditions reads as
\begin{equation}\label{concl3}
\Theta[h(t)]:= \left(\tfrac12 h_x^2-P(h)\right)|_{x=s_\pm(t)} = \pm \tfrac{\mmu}{\gamma} \dot s_\pm(t)-S,
\end{equation}
where $\{h(t)>0\}=(s_-(t),s_+(t))$, $\mmu \ge 0$ is a contact-line frictional coefficient, and $S$ is the non-dimensional spreading coefficient. We expect that:

\medskip

\begin{boxlabel}
\item[{\bf (S)}] for any value of the speed, \eqref{concl3}  selects a one-parameter family of fronts in {\bf (Q)} and a unique linear-log front in {\bf (L)}.
\end{boxlabel}

\medskip

\noindent Numerical evidence also suggests that linear-log fronts are steep for $\Theta[h]\gg 1$, whereas they display a precursor region ahead of the macroscopic contact line for $\Theta[h]\ll -1$. Therefore, we expect that \eqref{concl3} yields precursor regions for large positive values of the spreading coefficient.

\smallskip

The above four observations issue quite a few challenges.

\smallskip

\begin{itemize}

\item The rigorous validation of {\bf (Q)}, {\bf (L)} and {\bf (M)} is highly desirable. Once the asymptotics in {\bf (Q)} and {\bf (L)} have been proved, we expect that {\bf (S)} will follow as a byproduct.

\smallskip

\item For relatively large values of $S$, it would be interesting to quantify height and length of the precursor region, which appears to exist ahead of the ``macroscopic'' contact line, in relation to the contact-line condition \eqref{concl3}.

\smallskip

\item It would be very useful to have numerical simulations and/or matched asymptotic studies available for generic solutions to \eqref{concl1} with the contact-line condition \eqref{concl3}, for potentials $P$ of the general form \eqref{concl2}. Of particular interest would be the (in)stability of advancing/receding traveling waves, the detection of scaling laws --such as the Voinov-Cox-Hocking logarithmic correction to Tanner's law--, an estimate of the rate of convergence to equilibria, and an insight on the evolution of the precursor region for relatively large values of $S$.

\smallskip

\item Based on {\bf (S)}, for potentials $P$ of the form \eqref{concl2}, we conjecture that for any non-negative $h_0\in H^1(\R)$ such that $E[h_0]<+\infty$ there exists a unique solution to \eqref{concl1} with the contact-line conditions \eqref{TFE-bc} and \eqref{concl3}. A difficult but extremely interesting task would be to develop a well-posedness theory at least when $h_0$ is a perturbation of a traveling wave, in the spirit of \citep{GKO,GGKO,K1,K2,KM1,KM2,Gn1,GP}.

\smallskip

\item \citet{DSS} recently analyzed the $\Gamma$-convergence of the energy $E$ in the limit $m\to 3^-$ and vanishing $A$ ($A=A_m\approx (3-m)^2\to 0$ as $m\to 3$). It would be interesting to understand how this scaling limit extends to the dynamic framework.

\smallskip

\item In relation to the issue of infinite energy for $m\ge 3$, it would be interesting to explore the effect of taking the full curvature operator into account, i.e., replacing $1+\frac12 h_x^2$ with $\sqrt{1+h_x^2}$. We are aware of only a few studies \citep{NC1,NC2,NC3,NC4}, dealing with the statics under convex potentials.

\smallskip

\item  Interesting, though of seemingly lesser impact, would also be to face the above challenges for more general mobilities, e.g. of the form  $\m(h)=\frac13(h^3+\lambda^{3-n} h^n)$ (cf. alos Remark \ref{rem:n}).

\end{itemize}

\section{Appendix}

The limitation \eqref{dual} of the energy in \eqref{def-E} is well known, though with slightly different formulations; see e.g. \citet[Lemma 2.10]{DurG} in $\R$ and \citet[Theorem 2]{LM} on bounded domains. For completeness, here we provide a comprehensive statement in $\R$. Note that $E$ is a singular version of the \citet{AltP} functional.

\begin{lemma}\label{lem:repeat}
Let $E$ as in \eqref{def-E} and let $Q:[0,+\infty)\to \R$ be such that $Q\in C((0,+\infty))$, $Q(0)=0$ and $Q(h)\gtrsim h^{-2}$ as $h\to 0^+$. Let $h\in H^1_{\text{loc}}(\R)$ such that $h\ge 0$ and $\int_{\R} h\in (0,+\infty]$. If $E[h]$ is finite, then $h>0$ in $\R$ and $h\not\to 0$ as $|x|\to +\infty$.
\end{lemma}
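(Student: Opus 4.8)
The plan is to reduce everything to a single mechanism: the critical growth $Q(h)\gtrsim h^{-2}$ makes the integrand of $E$ in \eqref{def-E} \emph{non-integrable} along any descent of $h$ towards the value $0$, whether that descent happens at an interior point or at infinity. First I would fix $c,\delta>0$ such that $Q(t)\ge c\,t^{-2}$ for all $t\in(0,\delta]$, which is legitimate by the hypothesis $Q(h)\gtrsim h^{-2}$ together with continuity of $Q$ on $(0,+\infty)$. On the set $\{0<h\le\delta\}$ one then has $Q(h)>0$, so the integrand is nonnegative there, and finiteness of $E[h]$ forces $\int_{\{0<h\le\delta\}}\bigl(\tfrac12 h_x^2+Q(h)\bigr)\,\d x<+\infty$; this is the quantity I will contradict. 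By the Arithmetic--Geometric mean inequality, on $\{0<h\le\delta\}$,
\[
\tfrac12 h_x^2+Q(h)\ \ge\ \sqrt{2Q(h)}\,|h_x|\ \ge\ \sqrt{2c}\,\frac{|h_x|}{h}.
\]

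The second step is to turn the right-hand side into a genuinely divergent expression by means of the one-dimensional coarea formula (Banach indicatrix). Since $h\in H^1_{\loc}(\R)$ is locally absolutely continuous and $h_x=0$ a.e. on $\{h=0\}$, for the Borel weight $\phi(t)=t^{-1}\chi_{(0,\delta]}(t)$ one has
\[
\int_{\{0<h\le\delta\}} \frac{|h_x|}{h}\,\d x=\int_\R \phi(h)\,|h_x|\,\d x=\int_0^{\delta}\frac{N(t)}{t}\,\d t,
\]
where $N(t)=\#\{x\in\R:\ h(x)=t\}$ counts the preimages of the level $t$. Because $\int_0^{\delta} t^{-1}\,\d t=+\infty$, it suffices to show that $N(t)\ge 1$ for a.e. $t$ in a right neighbourhood of $0$ in order to conclude that $E[h]=+\infty$, a contradiction.

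The third step verifies $N(t)\ge 1$ for small $t$ in each of the two configurations to be excluded, using only that $h$ is continuous (as $H^1_{\loc}(\R)\hookrightarrow C(\R)$ in one dimension) and that $\int_\R h>0$ yields a point $x_1$ with $h(x_1)=:m_1>0$. If $h(x_0)=0$ for some $x_0$, then on the interval between $x_0$ and $x_1$ the continuous function $h$ runs from $0$ to $m_1$, so by the intermediate value theorem it attains every level $t\in(0,\min\{m_1,\delta\})$; hence $N(t)\ge 1$ there and $E[h]=+\infty$, which proves $h>0$ on $\R$. If instead $h(x)\to 0$ as $x\to+\infty$ (the case $x\to-\infty$ being identical), then for each $t\in(0,\min\{m_1,\delta\})$ there is $X>x_1$ with $h(X)<t<m_1=h(x_1)$, and the intermediate value theorem again produces a crossing, $N(t)\ge 1$, whence $E[h]=+\infty$. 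This contradiction shows $h\not\to 0$ at either end, in particular $h\not\to 0$ as $|x|\to+\infty$.

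I do not expect a deep obstacle: the whole content sits in the borderline logarithmic divergence $\int_0 t^{-1}\,\d t=+\infty$ that the critical exponent $Q\gtrsim h^{-2}$ produces, and everything downstream is continuity plus the intermediate value theorem. The points that require care are purely technical. One is justifying the coarea identity for merely $H^1_{\loc}$ (rather than $C^1$) functions, which follows from local absolute continuity and the vanishing of $h_x$ a.e. on $\{h=0\}$, so that localising to $\{0<h\le\delta\}$ is harmless. The other is keeping the AM--GM step inside the region $\{Q\ge 0\}$, which is precisely why I localise to $\{0<h\le\delta\}$ from the outset instead of integrating over all of $\{h>0\}$, where $Q$ may change sign. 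Finally, I would note that positivity of the mass is used only to guarantee a single point $x_1$ with $h(x_1)>0$, and that the same argument in fact delivers the stronger conclusion $\liminf_{|x|\to+\infty} h>0$.
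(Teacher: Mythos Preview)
Your proof is correct and takes a genuinely different route from the paper's. The paper splits the two conclusions and treats each with a one-line observation: for an interior zero $x_0$ it invokes the $H^1$ H\"older bound $h(x)\lesssim |x-x_0|^{1/2}$ on a one-sided neighborhood $\mathcal U$ with $h>0$, whence $Q(h(x))\gtrsim |x-x_0|^{-1}$ is non-integrable on $\mathcal U$; for the behavior at infinity it simply notes that $h\to 0$ forces $Q(h)\ge C>0$ on an entire tail $[X,+\infty)$, which already makes the energy diverge without using the precise rate $h^{-2}$.

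By contrast, you funnel both cases through a single mechanism: the AM--GM bound $\tfrac12 h_x^2+Q(h)\ge \sqrt{2c}\,|h_x|/h$ on $\{0<h\le\delta\}$, followed by the Banach indicatrix/coarea identity to convert $\int |h_x|/h$ into $\int_0^\delta N(t)\,t^{-1}\,\d t$, and then the intermediate value theorem to show $N(t)\ge 1$ near $t=0^+$ whenever $h$ touches zero either at a finite point or along a sequence going to infinity. The paper's argument is more elementary and needs no coarea; your argument is unified, makes the role of the critical exponent $Q\gtrsim h^{-2}$ (i.e.\ the logarithmic divergence $\int_0 t^{-1}\,\d t$) completely transparent, and, as you observe, immediately upgrades the second conclusion to $\liminf_{|x|\to\infty} h(x)>0$, which the paper's tail argument does not yield without extra work.
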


\begin{proof}
Since the mass is positive, $h\not\equiv 0$.
For the first statement, assume by contradiction that $h^{-1}(\{0\})\ne \emptyset$. Since $h\not\equiv 0$, $x_0\in \R$ exists such that $h(x_0)=0$ and a left or right neighborhood ${\mathcal U}$ of $x_0$ exist such that $h>0$ in ${\mathcal U}$. Now, $h\in H^1({\mathcal U})$ implies that $h(x)\lesssim |x-x_0|^{1/2}$ in ${\mathcal U}$; therefore
$Q(h(x))\geq h(x)^{-2}\gtrsim |x-x_0|^{-1}$ in ${\mathcal U}$, whence a contradiction:
$$
\int_{\mathcal U} \left(\tfrac 12h_x^2 + Q(h(x))\right)\d x \ge \int_{\mathcal U} Q(h(x)) \d x\gtrsim \int_{\mathcal U} |x-x_0|^{-1} \d x = +\infty.
$$
The second statement is straightforward: if by contradiction $h\to 0$ as, say, $x\to +\infty$, then we would have $0<h\ll 1$ for $x\gg 1$, hence $Q(h)\gg 1$ for all $x\gg 1$, which obviously contradicts the finiteness of the energy.
\end{proof}

\tocless\subsection*{Acknowledgements} Authors have been partially supported by GNAMPA of INdAM (Project ``Analisi di fenomeni di wetting in presenza di potenziali singolari''). The first author has also been supported by PON Ricerca e Innovazione D.M. 1062/21.

\tocless\subsection*{Data availability statement} All data generated or analysed during this study are included in this published article.

\footnotesize

\bibliography{bibliomod}{}
\bibliographystyle{abbrvnat}


\end{document}